\numberwithin{equation}{section}
\newtheorem{Theorem}{Theorem}[section]
\newtheorem*{Theorem*}{Theorem}
\newtheorem{Corollary}[Theorem]{Corollary}
\newtheorem{Lemma}[Theorem]{Lemma}
\newtheorem{Proposition}[Theorem]{Proposition}
 { \theoremstyle{definition}
\newtheorem{Definition}[Theorem]{Definition}

\newtheorem{Remark}[Theorem]{Remark} }
\newtheorem{Th}{Theorem}[section]
\newtheorem{Cor}{Corollary}[section]
\newtheorem{Prop}{Proposition}[section]
\newtheorem{Lem}{Lemma}[section]
\theoremstyle{definition}
\newtheorem{Def}{Definition}[section]
\newtheorem{Rem}{Remark}[section]
\newtheorem{Ex}{Example}[section]
\newcommand{\bet}{\begin{Th}}
\newcommand{\ent}{\stepcounter{Cor}
 \stepcounter{Prop}\stepcounter{Lem}\stepcounter{Def}
 \stepcounter{Rem}\stepcounter{Ex}\end{Th}}
\newcommand{\bec}{\begin{Cor}}
\newcommand{\enc}{\stepcounter{Th}
 \stepcounter{Prop}\stepcounter{Lem}\stepcounter{Def}
 \stepcounter{Rem}\stepcounter{Ex}\end{Cor}}
\newcommand{\bep}{\begin{Prop}}
\newcommand{\enp}{\stepcounter{Th}
 \stepcounter{Cor}\stepcounter{Lem}\stepcounter{Def}
 \stepcounter{Rem}\stepcounter{Ex}\end{Prop}}
\newcommand{\bel}{\begin{Lem}}
\newcommand{\enl}{\stepcounter{Th}
 \stepcounter{Cor}\stepcounter{Prop}\stepcounter{Def}
 \stepcounter{Rem}\stepcounter{Ex}\end{Lem}}
\newcommand{\bef}{\begin{Def}}
\newcommand{\enf}{\stepcounter{Th}
 \stepcounter{Cor}\stepcounter{Prop}\stepcounter{Lem}
 \stepcounter{Rem}\stepcounter{Ex}\end{Def}}
\newcommand{\ber}{\begin{Rem}}
\newcommand{\enr}{
 \stepcounter{Th}\stepcounter{Cor}\stepcounter{Prop}
 \stepcounter{Lem}\stepcounter{Def}\stepcounter{Ex}\end{Rem}}
\newcommand{\bee}{\begin{Ex}}
\newcommand{\ene}{
 \stepcounter{Th}\stepcounter{Cor}\stepcounter{Prop}
 \stepcounter{Lem}\stepcounter{Def}\stepcounter{Rem}\end{Ex}}
\newcommand{\R}{\mathbb{R}}
\newcommand{\C}{\mathbb{C}}
\newcommand{\KK}{\mathbb{K}}
\newcommand{\PP}{\mathbb{P}}
\newcommand{\uuu}{\boldsymbol{u}}
\newcommand{\0}{\boldsymbol{0}}
\newcommand{\pa}{\partial}
\newcommand{\OO}{{\mathbb O}}
\newcommand{\rank}{\operatorname{rank}}
\newcommand{\Spin}{{\mbox {\rm Spin}}}
\newcommand{\SVC}{{\mbox {\rm SVC}}}
\begin{document}

\allowdisplaybreaks

\newcommand{\arXivNumber}{2501.02789}

\renewcommand{\PaperNumber}{076}

\FirstPageHeading

\ShortArticleName{Prolongation of $(8,15)$-Distribution of Type $F_4$ by Singular Curves}

\ArticleName{Prolongation of $\boldsymbol{(8,15)}$-Distribution of Type $\boldsymbol{F_4}$\\ by Singular Curves}

\Author{Goo ISHIKAWA~$^{\rm a}$ and Yoshinori MACHIDA~$^{\rm b}$}

\AuthorNameForHeading{G.~Ishikawa and Y.~Machida}

\Address{$^{\rm a)}$~Department of Mathematics, Hokkaido University, Kita 10 Nishi 8, Kita-ku,\\
\hphantom{$^{\rm a)}$}~Sapporo 060-0810, Japan}
\EmailD{\href{mailto:ishikawa@math.sci.hokudai.ac.jp}{ishikawa@math.sci.hokudai.ac.jp}}

\Address{$^{\rm b)}$~Department of Mathematics, Faculty of Science, Shizuoka University,\\
\hphantom{$^{\rm b)}$}~836, Ohya, Suruga-ku, Shizuoka 422-8529, Japan}
\EmailD{\href{mailto:machida.yoshinori@shizuoka.ac.jp}{machida.yoshinori@shizuoka.ac.jp}, \href{mailto:yomachi212@gmail.com}{yomachi212@gmail.com}}

\ArticleDates{Received January 30, 2025, in final form September 12, 2025; Published online September 18, 2025}

\Abstract{Cartan gives the model of $(8, 15)$-distribution with the exceptional simple Lie algebra $F_4$ as its symmetry algebra in his paper (1893), which is published one year before his thesis. In the present paper, we study abnormal extremals (singular curves) of Cartan's model from viewpoints of sub-Riemannian geometry and geometric control theory.
Then we construct the prolongation of Cartan's model based on the data related to its singular curves, and obtain the nilpotent graded Lie algebra which is isomorphic to the negative part of the graded Lie algebra $F_4$.}

\Keywords{exceptional Lie algebra; singular curve; constrained Hamiltonian equation}

\Classification{53C17; 58A30; 17B25; 34H05; 37J37; 49K15; 53D25}

\section{Introduction}

Let $M$ be a manifold of dimension $15$ and $D \subset TM$ a distribution, i.e., a vector subbundle of
the tangent bundle $TM$ of rank $8$.
Then $D$ is called an $(8, 15)$-distribution
if ${\mathcal D} + [{\mathcal D}, {\mathcal D}] = {\mathcal{TM}}$ for the sheave ${\mathcal D}$ (resp.
${\mathcal{TM}}$) of local sections to $D$ (resp.\ $TM$).
In this paper, we study a special class of $(8, 15)$-distributions related to the simple Lie group $F_4$.

Distributions are important subjects in manifold theory and global analysis. They are studied
also related to the theory of Lie groups, Lie algebras and their representations.
Then the theory of prolongations and equivalence problems of distributions are established by many authors (see, for instance,
\cite{BCGGG,Montgomery, Morimoto, Tanaka1}).
On symmetries for distributions, there are well-known several
powerful and beautiful methods to investigate, based on differential geometry and representation theory;
Cartan's prolongation, Tanaka's prolongation, and Kostant's theorem on Bott--Borel--Weil theory
and so on \cite{Cartan1,Helgason, Hwang, LM, Kostant, Tanaka1, Tanaka2, Yamaguchi}.

We provide, in this paper, a way of prolongations of $(8, 15)$-distributions of type $F_4$
via the notion of {\it abnormal extremals} or {\it singular curves} and related objects from viewpoints of
sub-Riemannian geometry and geometric control theory \cite{AS, LS, Montgomery0, Montgomery}
which recovers several results explicitly. The relations of our constructions with those by the method of representation theory are presented in Remark~\ref{Remark-representation-theory} of Section~\ref{Singular-velocity} in our paper.

The singular curves or abnormal extremals are extensively used to study distributions by many authors (see, for instance,
\cite{AZ, BH, DZ1, DZ2}).
In the previous papers (see \cite{IKTY, IKY1,IMT}), we study $(2, 3, 5)$-distributions or
Cartan distributions \cite{Bryant, Cartan1} using singular curves.
Here a~$(2, 3, 5)$-distribution means a distribution $D$ of rank $2$ on a $5$-dimensional manifold $M$
such that \smash{${{\mathcal D}^{(2)} := {\mathcal D} + [{\mathcal D}, {\mathcal D}]}$}
becomes the sheaf of local sections of a distribution \smash{$D^{(2)}$} of rank $3$ and that
\smash{${{\mathcal{TM}} = {\mathcal D}^{(3)} := {\mathcal D}^{(2)} + \bigl[{\mathcal D}, {\mathcal D}^{(2)}\bigr]}$}.
Then we show the prolongation using the cone of singular curves of any $(2, 3, 5)$-distribution has
the nilpotent gradation algebra which is isomorphic to the negative part of the graded simple Lie algebra $G_2$.
Note that the prolongation procedure is a partial case of twistor construction in the general framework of
parabolic geometry~\cite{BE, CS}.\looseness=1

In his book \cite{Montgomery} on sub-Riemannian geometry,
Montgomery gives expositions on $(4, 7)$-distributions.
In particular, Montgomery classifies $(4, 7)$-distributions into elliptic, hyperbolic and parabolic $(4, 7)$-distributions and
shows the non-existence of non-trivial singular curves for elliptic $(4, 7)$-distribution. Moreover, he develops
Cartan's approach for $(4, 7)$-distributions and studies their symmetry groups.
In the previous paper \cite{IM}, we study hyperbolic $(4, 7)$-distributions and their prolongations via
the cone of singular curves.
Then we observe, contrary to the case of $(2, 3, 5)$-distributions,
the isomorphism classes of the nilpotent graded Lie algebra of prolongations are never unique and
then we specifies the class of $C_3$-$(4, 7)$-distributions by the condition that
 the graded algebra associated to the $(4, 7)$-distribution
 after prolongation is isomorphic to the negative part of the simple Lie algebra $C_3$.

Cartan, in his paper \cite{Cartan1} which is published one year before his thesis \cite{Cartan2},
gives the model of $(8, 15)$-distribution
whose infinitesimal symmetry algebra is the simple Lie algebra $F_4$.
The purpose of the present paper is to study Cartan's model of $(8, 15)$-distribution from viewpoints of
sub-Riemannian geometry and geometric control theory.
We construct its prolongation using the data related to abnormal or singular curves,
and verify that the prolonged nilpotent graded algebra obtained
by our method is isomorphic to the negative part of the simple Lie algebra~$F_4$.\looseness=1

Note that the complex simple Lie algebra $F_4$ has three real forms;
one {\it compact} type and two {\it non-compact} types
denoted as \smash{$F_{4(4)}$} and as \smash{$F_{4(-20)}$} (see \cite{Cartan1, Cartan2, Cartan3, Gantmacher, Jacobson}).
In \cite{Helgason}, \smash{$F_{4(4)}$} (resp.~\smash{$F_{4(-20)}$}) is denoted by $F_4I$ (resp.\ $F_4II$),
and in \cite{Harvey}, by $\widetilde{F}_4$ (resp.\ \smash{$F_4'$}).
Cartan's model, which we treat in the present paper, gives the
$(8, 15)$-distribution corresponding to $F_{4(4)}$, which maybe called
the ``hyperbolic'' $F_4$-$(8, 15)$-distribution.
Nurowski \cite{Nurowski}
has given the explicit models of $(8, 15)$-distributions of type $F_4$ and $(16, 24)$-distributions of type
$E_6$. Though we do not touch the details here,
it can be observed that the real $(8, 15)$-distribution of type~\smash{$F_{4(-20)}$} in Nurowski's normal form has
the canonical definite conformal metric and it has no nontrivial singular curves
 (cf.\ Sections~\ref{Abnormal-bi-extremals-general} and~\ref{Singular-velocity} of this paper).
Thus Nurowski's $(8, 15)$-distribution of type~$F_{4(-20)}$ can be called
 ``elliptic'' $F_4$-$(8, 15)$-distribution. Refer \cite{Nurowski} also for related references and historical remarks.
 Note also that both $(8, 15)$-distributions of type
 \smash{$F_{4(4)}$} and \smash{$F_{4(-20)}$} appear, as two cases of real simple Lie algebras,
 in the classification of certain sub-Riemannian structures in \cite{AS2, MKMV}.

In Section~\ref{Cartan-model}, we recall Cartan's model $\bigl(\KK^{15}, D\bigr)$ of $(8, 15)$-distribution
associated to the simple Lie algebra $F_4$.
The basics on sub-Riemannian geometry and geometric control theory which
we need in this paper are given in Section~\ref{Abnormal-bi-extremals-general}.
We study the singular curves of Cartan's model and show that there exist canonically
the conformal metrics on ${D \subset T\KK^{15}}$ and on ${D^\perp \subset T^*\KK^{15}}$ in~Section~\ref{Singular-velocity}.
In Section~\ref{Null-flags}, we construct null-flag manifold of dimension $9$ which prolongs $\bigl(\KK^{15}, D\bigr)$
to $(W, E)$ so that $\dim(W) = 24$ and $E$ is of rank $4$.
In Section~\ref{Prolongation-of-Cartan's-$(8, 15)$-distribution}, we show that $E$
has the small growth vector $(4,7,10,13,16,18,20,21,22,23,24)$ and the gradation algebra of $E$
is isomorphic to the negative part of the simple graded Lie algebra
with respect to filtration defined by the set of all roots of $F_4$.
In Section~\ref{$(8, 15)$-distributions of type $F_4$}, we
introduce the class of $(8, 15)$-distributions of type~$F_4$ regarding the arguments of previous sections
and show that also the gradation algebra of the prolongation of any $(8, 15)$-distributions of type $F_4$
is isomorphic to the negative part of the simple Lie algebra~$F_4$ with respect
to the filtration defined by the set of all roots of $F_4$ (Theorem~\ref{Main-Theorem}).\looseness=1

In this paper, all manifolds and maps are supposed to be of class $C^\infty$ unless otherwise stated.\looseness=1

\section[Cartan's model of (8, 15)-distributions of type F\_4]{Cartan's model of $\boldsymbol{(8, 15)}$-distributions of type $\boldsymbol{F_4}$}
\label{Cartan-model}

We recall Cartan's model of $(8, 15)$-distribution \cite{Cartan1, Yamaguchi}
which has, as the infinitesimal symmetries, the simple Lie algebra $F_4$:
\begin{center}
\includegraphics[width=4truecm,clip,
bb=7 10 360 40
]{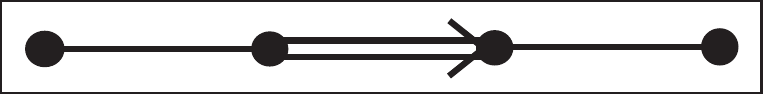}
\\
{\rm The Dynkin diagram of $F_4$.}
\end{center}

As for the exceptional Lie algebra $F_4$, see, for instance, also \cite{Adams2, Adams1, Bourbaki, Humphreys,Sato}.

The model of $(8, 15)$-distributions found by Cartan
is derived from the homogeneous space by the parabolic subgroup of the simple Lie group $F_4$
which corresponds to $\{ \alpha_4\}$ for the simple roots $\alpha_1$, $\alpha_2$, $\alpha_3$, $\alpha_4$
\cite{Cartan1, Yamaguchi}:
\begin{center}
\includegraphics[width=4truecm, clip,
bb=15 10 365 70
]{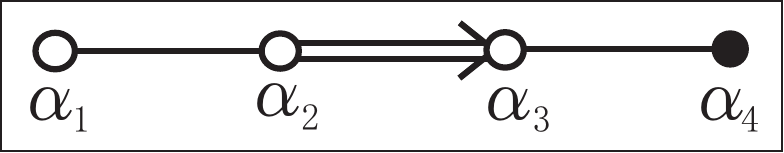}
\end{center}
Here we have simply marked the corresponding root in black to the parabolic subgroup,
which not meant, say, the Satake diagram.
Note that, as the standard way, a cross under the node can be used to indicate a parabolic subgroup as in~\cite{BE}.

Let $\KK = \R$ or $\C$.
On $\KK^{15}$ with the system of coordinates
$z$, $x_1$, $x_2$, $x_3$, $x_4$, $y_1$, $y_2$, $y_3$, $y_4$, $x_{ij}$, $1 \leq i < j \leq 4$, and consider the $C^\infty$ (resp.\ holomorphic) $1$-forms
\begin{gather*}
\omega = {\rm d}z - y_1{\rm d}x_1 - y_2{\rm d}x_2 - y_3{\rm d}x_3 - y_4{\rm d}x_4,
\\
\omega_{ij} =
{\rm d}x_{ij} - (x_i{\rm d}x_j - x_j{\rm d}x_i + y_h{\rm d}y_k - y_k{\rm d}y_h), \qquad 1 \leq i < j \leq 4,
\end{gather*}
where $(i, j, h, k)$ is an even permutation of $(1, 2, 3, 4)$.
Let
\begin{gather*}
Z,\ X_{12},\ X_{13},\ X_{14},\ X_{23},\ X_{24},\ X_{34},\ X_1,\ X_2,\ X_3,\ X_4,\ Y_1,\ Y_2,\ Y_3,\ Y_4
\end{gather*}
be the dual frame of $T\KK^{15}$ to the frame
\[
\omega,\ \omega_{12},\ \omega_{13},\ \omega_{14},\ \omega_{23},\ \omega_{24},\ \omega_{34},\
{\rm d}x_1,\ {\rm d}x_2,\ {\rm d}x_3,\ {\rm d}x_4,\ {\rm d}y_1,\ {\rm d}y_2,\ {\rm d}y_3,\ {\rm d}y_4
\]
of $T^*\KK^{15}$.
Then $D \subset T\KK^{15}$ is defined as the distribution generated by $X_1$, $X_2$, $X_3$, $X_4$, $Y_1$, $Y_2$, $Y_3$, $Y_4$.
Explicitly the distribution $D \subset T\KK^{15}$ has the system of generators
\begin{gather*}
X_1 = \frac{\pa}{\pa x_1} + y_1\frac{\pa}{\pa z} - x_2\frac{\pa}{\pa x_{12}} - x_3\frac{\pa}{\pa x_{13}} -
x_4\frac{\pa}{\pa x_{14}},
\\
X_2 = \frac{\pa}{\pa x_2} + y_2\frac{\pa}{\pa z} + x_1\frac{\pa}{\pa x_{12}} - x_3\frac{\pa}{\pa x_{23}} -
x_4\frac{\pa}{\pa x_{24}},
\\
X_3 = \frac{\pa}{\pa x_3} + y_3\frac{\pa}{\pa z} + x_1\frac{\pa}{\pa x_{13}} + x_2\frac{\pa}{\pa x_{23}} -
x_4\frac{\pa}{\pa x_{34}},
\\
X_4 = \frac{\pa}{\pa x_4} + y_4\frac{\pa}{\pa z} + x_1\frac{\pa}{\pa x_{14}} + x_2\frac{\pa}{\pa x_{24}} +
x_3\frac{\pa}{\pa x_{34}},
\\
Y_1 = \frac{\pa}{\pa y_1} - y_4\frac{\pa}{\pa x_{23}} + y_3\frac{\pa}{\pa x_{24}} - y_2\frac{\pa}{\pa x_{34}},
\\
Y_2 = \frac{\pa}{\pa y_2} + y_4\frac{\pa}{\pa x_{13}} - y_3\frac{\pa}{\pa x_{14}} + y_1\frac{\pa}{\pa x_{34}},
\\
Y_3 = \frac{\pa}{\pa y_3} - y_4\frac{\pa}{\pa x_{12}} + y_2\frac{\pa}{\pa x_{14}} - y_1\frac{\pa}{\pa x_{24}},
\\
Y_4 = \frac{\pa}{\pa y_4} + y_3\frac{\pa}{\pa x_{12}} - y_2\frac{\pa}{\pa x_{13}} + y_1\frac{\pa}{\pa x_{23}}.
\end{gather*}

Moreover, we have that
\smash{$Z = \frac{\pa}{\pa z}$} and \smash{$X_{ij} = \frac{\pa}{\pa x_{ij}}$}, $1 \leq i < j \leq 4$.
Then we get the following bracket relations:
\begin{gather*}
[X_1, X_2] = 2X_{12},\qquad [X_1, X_3] = 2X_{13},\qquad [X_1, X_4] = 2X_{14},
\\
 [X_2, X_3] = 2X_{23},\qquad [X_2, X_4] = 2X_{24},
\\
 [X_3, X_4] = 2X_{34},
\\
[Y_1, Y_2] = 2X_{34},\qquad [Y_1, Y_3] = -2X_{24},\qquad [Y_1, Y_4] = 2X_{23},
\\
 [Y_2, Y_3] = 2X_{14},\qquad [Y_2, Y_4] = -2X_{13},
\\
 [Y_3, Y_4] = 2X_{12},
\\
[Y_1, X_1] = [Y_2, X_2] = [Y_3, X_3] = [Y_4, X_4] = Z, \qquad [Y_i, X_j] = 0, \qquad i \not= j.
\end{gather*}
Moreover, we have
\[
[X_i, X_{jk}] = 0, \qquad [Y_i, X_{jk}] = 0, \qquad [X_i, Z] = 0, \qquad [Y_i, Z] = 0 \qquad \text{for any} \ i, \ j, \ k.
\]

\begin{Remark}
We set, for $1 \leq i < j \leq 4$, a sub-distribution
$D_{ij} = \langle X_i, X_j, Y_h, Y_k, X_{ij}\rangle$ of $D^{(2)}$, where $(i, j, h, k)$ is a permutation of $(1, 2, 3, 4)$.
Then we see each $D_{ij}$ is completely integrable
and each leaf of the foliation induced by $D_{ij}$ of $\KK^{15}$ has a contact structure.
Thus we have six contact foliations in $\KK^{15}$.
For example, for $i = 1$, $j = 2$, then the contact foliation is given by the Pfaff system
\begin{gather*}
{\rm d}z - y_1{\rm d}x_1 - y_2{\rm d}x_2 = 0,\qquad {\rm d}x_3 = 0,\qquad {\rm d}x_4 = 0,\qquad {\rm d}y_1 = 0,\qquad {\rm d}y_2 = 0,
\\
{\rm d}x_{13} + x_3{\rm d}x_1 + y_2{\rm d}y_4 = 0,\qquad {\rm d}x_{14} + x_4{\rm d}x_1 - y_2{\rm d}y_3 = 0,\qquad
{\rm d}x_{23} + x_3{\rm d}x_2 + y_1{\rm d}y_4 = 0,
\\
{\rm d}x_{24} + x_4{\rm d}x_2 + y_1{\rm d}y_3 = 0,\qquad {\rm d}x_{34} = 0,
\end{gather*}
and with the $1$-form
\[
{\rm d}x_{12} + x_2{\rm d}x_1 - x_1{\rm d}x_2 + y_4{\rm d}y_3 - y_3{\rm d}y_4,
\]
which gives a contact structure on each leaf of the foliation defined by $D_{12}$.
\end{Remark}

\section{Abnormal bi-extremals and singular curves of distributions}
\label{Abnormal-bi-extremals-general}

Here we recall several notions in geometric control theory and sub-Riemannian geometry.
For details, consult, for instance, the references \cite{AS, BC, Montgomery0, Montgomery}.

Let $M$ be a real $C^\infty$ manifold,
$D \subset TM$ a distribution endowed with a positive definite met\-ric~${g\colon D\otimes D \to \R}$
on a manifold $M$, and $\gamma\colon [a, b] \to M$ an absolutely continuous curve sat\-is\-fy\-ing~$\dot{\gamma}(t) \in D_{\gamma(t)}$
for almost all $t \in I$, which is called a {\it $D$-integral curve}.
Then the arc-length of $\gamma$ is defined by
\smash{$L(\gamma) := \int_a^b \sqrt{g(\dot{\gamma}(t), \dot{\gamma}(t))} {\rm d}t$}.
A curve $\gamma$ is called a {\it $D$-geodesic} if it minimises the arc-length locally.

Let $\rank(D) = r$ and, just for simplicity, $\xi_1, \dots, \xi_r$ be an orthonormal frame of $(D, g)$ over~$M$.
Then we define $F\colon D \cong M\times \R^r \to TM$ by \smash{$F(x, u) = \sum_{i=1}^r u_i\xi_i(x)$}.

Consider the optimal control problem for the energy function on $D$ defined by
\[
e = \frac{1}{2} g\Biggl(\sum_{i=1}^r u_i \xi_i(x), \ \sum_{i=1}^r u_i \xi_i(x)\Biggr) = \frac{1}{2} \sum_{i=1}^r u_i(t)^2.
\]
Note that the problem of minimising arc-length and that of minimising energy function are known to be equivalent
up to re-parametrisations \cite{AS, Montgomery}.
Then the Hamiltonian function on~$(D \times_M T^*M) \times \R$ is given by
\[
H(x, p, u, p^0) = \Biggl\langle p, \sum_{i=1}^r u_i\xi_i(x)\Biggr\rangle + p^0\Biggl(\frac{1}{2}\sum_{i=1}^r u_i^2\Biggr).
\]
Here $D \times_M T^*M = \{ (x, u), (x', p) \in D\times T^*M \mid x = x'\} \cong T^*M \times \R^r$ and $p^0$ is an additional parameter.

Regarding the optimal control problem for minimising the energy function of $D$-integrable curves,
we have, by Pontryagin's maximum principle,
if $\gamma$ is a $D$-geodesic, then, for $\dot{\gamma}(t) = (x(t), u(t))$,
there exists a Lipschitz curve $(x(t), p(t)) \in T^*M$ and non-positive constant $p^0 \leq 0$
such that the following constrained Hamiltonian equation in terms of $H = H\bigl(x, p, u, p^0\bigr)$
is satisfied:
\begin{gather*}
\dot{x}_i(t) = \frac{\pa H}{\pa p_i}\bigl(x(t), p(t), u(t), p^0\bigr), \qquad 1 \leq i \leq m,
\\
\dot{p}_i(t) = - \frac{\pa H}{\pa x_i}\bigl(x(t), p(t), u(t), p^0\bigr), \qquad 1 \leq i \leq m,
\end{gather*}
with constraints
\smash{$\frac{\pa H}{\pa u_j}\bigl(x(t), p(t), u(t), p^0\bigr) = 0$}, $1 \leq j \leq r$, \smash{$\bigl(p(t), p^0\bigr) \not= 0$}.

If \smash{$p^0 < 0$}, then
the curve $(x(t), p(t))$ (resp.\ $x(t)$)
of a solution of the above constrained Hamiltonian equation
is called a {\it normal bi-extremal}
(resp.\ {\it normal extremal}) respectively.
If~${p^0 = 0}$, then bi-extremals and extremals are called {\it abnormal}.
Note that the notion of abnormal \mbox{(bi-)extremals} is independent of the metric $g$ on $D$ and
depends only on the distribution $D$.

The constraint
\smash{$\frac{\pa H}{\pa u_j} = 0$} is equivalent to that
\smash{$p^0u_j = - \langle p, \xi_j(x)\rangle$}.
In the normal case, i.e., $p^0 < 0$, we have
\smash{$u_j = - \frac{1}{p^0}\langle p, \xi_j(x)\rangle$}. Because the Hamiltonian
is linear on $\bigl(p, p^0\bigr)$, by normalising as $p^0 = -1$, we have
\smash{$H = \frac{1}{2}\sum_{i=1}^r \langle p, \xi_i(x)\rangle^2$}.

For abnormal extremals, the constrained Hamiltonian equation reads as
\begin{gather*}
\dot{x} = \ u_1\xi_1(x) + u_2\xi_2(x) + \cdots + u_r\xi_i(x),
\\
\dot{p} = - \biggl(u_1\frac{\pa H_{\xi_1}}{\pa x} + u_2\frac{\pa H_{\xi_2}}{\pa x} + \cdots +
u_r\frac{\pa H_{\xi_r}}{\pa x}\biggr),
\end{gather*}
with constraints
$H_{\xi_1} = 0, H_{\xi_2} = 0, \dots, H_{\xi_r} = 0$ and $p \not= 0$,
where $H_{\xi_i}(x, p) := \langle p, \xi_i(x)\rangle$.

Given a distribution $D \subset TM$, for any $x \in M$, we define the subbundle $D^\perp \subset T^*M$ by
\[
D^\perp_x := \{ \alpha \in T^*_xM \mid \langle \alpha, v\rangle = 0, {\mbox{\rm\ for any\ }} v \in D_x\}.
\]
Then the above constraints mean that $p(t) \in D^\perp_{x(t)}$.

The notion of abnormal extremals coincides with that of singular curves, i.e.,
critical points of the end-point mapping \cite{Montgomery0, Montgomery}.
Let $x_0 \in M$ and $I = [a, b]$ an interval. Let
$\Omega$ be the set of Lipschitz continuous curves $\gamma\colon I \to M$ with
$\dot{\gamma}(t) \in D_{\gamma(t)}$ for almost all $t \in I$, which is called a $D$-integral curve,
and $\gamma(a) = x_0$.
Then the {\it endpoint mapping}
${\rm End}\colon \Omega \to M$
is defined by ${\rm End}(\gamma) := \gamma(b)$.
A curve $\gamma \in \Omega$ is called a {\it $D$-singular curve} if $\gamma$ is
a critical point of the endpoint mapping,
i.e., the differential map
${\rm d}_{\gamma}{\rm End}\colon T_{\gamma}\Omega \to T_{\gamma(b)}M$
is not surjective, for an~appropriate manifold structure of $\Omega$ (and $M$).

We introduce the key notion of the present paper.

\begin{Definition}
We define the {\it singular velocity cone} $\SVC(D) \subset TM$ of a distribution ${D \subset TM}$ by
the set of tangent vectors $v \in T_xM$, $x \in M$ such that there exists a $D$-singular curve
$\gamma\colon (\R, 0) \to (M, x)$ with $\gamma'(0) = v$.
\end{Definition}

Note that $\SVC(D)$ is a cone field over $M$, i.e., $\SVC(D)$ is invariant under
the fibrewise $\R^\times$-multiplication on $TM$.

The following lemma is used in the following sections. We have given a proof using coordinates to make sure.

\begin{Lemma}[{\cite{AS}} and {\cite[Section~4.2]{BC}}]
\label{Singular-curve-lemma} For a distribution $D$ generated by $\xi_1, \dots, \xi_r$,
we have, along abnormal bi-extremals $(x(t), p(t))$ and corresponding $u(t)$, that
\[
\frac{{\rm d}}{{\rm d}t}H_{\xi_i}(t) = \sum_{j=1}^r u_j(t) H_{[\xi_i, \xi_j]}(t), \qquad 1 \leq i \leq r.
\]
\end{Lemma}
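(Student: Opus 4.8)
The plan is to compute the time derivative of $H_{\xi_i}(t) = \langle p(t), \xi_i(x(t))\rangle$ directly from the abnormal bi-extremal equations, treating $H_{\xi_i}$ as a function on $T^*M$ pulled back along the curve $(x(t),p(t))$. First I would fix local coordinates $x = (x_1,\dots,x_m)$ on $M$ and the induced coordinates $(x,p)$ on $T^*M$, and write $H_{\xi_i}(x,p) = \sum_{a} p_a \xi_i^a(x)$, where $\xi_i = \sum_a \xi_i^a(x)\,\pa/\pa x_a$. By the chain rule,
\[
\frac{{\rm d}}{{\rm d}t}H_{\xi_i}(t) = \sum_a \dot{p}_a(t)\,\xi_i^a(x(t)) + \sum_{a,b} p_a(t)\,\frac{\pa \xi_i^a}{\pa x_b}(x(t))\,\dot{x}_b(t).
\]

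Next I would substitute the abnormal extremal equations recalled just above the lemma, namely $\dot{x} = \sum_j u_j \xi_j(x)$ and $\dot{p} = -\sum_j u_j\, \pa H_{\xi_j}/\pa x$, i.e.\ $\dot{p}_a = -\sum_j u_j \sum_b p_b\, \pa\xi_j^b/\pa x_a$. Plugging these in and collecting the coefficient of each $u_j$, the derivative becomes
\[
\frac{{\rm d}}{{\rm d}t}H_{\xi_i}(t) = \sum_{j=1}^r u_j(t)\Biggl( -\sum_{a,b} p_b\,\frac{\pa \xi_j^b}{\pa x_a}\,\xi_i^a + \sum_{a,b} p_a\,\frac{\pa \xi_i^a}{\pa x_b}\,\xi_j^b\Biggr).
\]
After relabelling summation indices, the parenthesised expression is exactly $\sum_c p_c\bigl(\sum_b (\xi_i^b\,\pa\xi_j^c/\pa x_b - \xi_j^b\,\pa\xi_i^c/\pa x_b)\bigr) = \langle p, [\xi_i,\xi_j]\rangle = H_{[\xi_i,\xi_j]}$, using the coordinate formula for the Lie bracket of vector fields. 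This yields the claimed identity.

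There is no serious obstacle here; the only point requiring care is the bookkeeping of indices and the sign conventions, so that the two double sums combine precisely into the bracket rather than its negative. I would double-check this by noting that the $i = j$ term vanishes (as $[\xi_i,\xi_i] = 0$), which is a quick consistency test. One may alternatively phrase the whole computation invariantly: $\frac{d}{dt}H_{\xi_i}\circ(x,p) = \{H, H_{\xi_i}\}$ along the flow of the Hamiltonian $H = \sum_j u_j H_{\xi_j}$ on $T^*M$ (with $u_j$ frozen along the curve), together with the standard Poisson-bracket identity $\{H_{\xi_j}, H_{\xi_i}\} = H_{[\xi_j,\xi_i]} = -H_{[\xi_i,\xi_j]}$ for the lifts of vector fields; this gives $\frac{d}{dt}H_{\xi_i} = \sum_j u_j\{H_{\xi_j},H_{\xi_i}\} = \sum_j u_j H_{[\xi_j,\xi_i]}$, and matching the sign convention used in the statement completes the proof. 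Since the authors explicitly say they prefer a coordinate proof ``to make sure,'' I would present the coordinate version as the main argument and perhaps mention the Poisson-bracket viewpoint as a remark.
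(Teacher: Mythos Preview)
Your proposal is correct and follows essentially the same approach as the paper: a direct coordinate computation that differentiates $H_{\xi_i}=\sum_a p_a\,\xi_i^a$ via the chain rule, substitutes the Hamiltonian equations for $\dot x$ and $\dot p$, and regroups the resulting double sums to recognise the coordinate expression for $[\xi_i,\xi_j]$. The only addition beyond the paper is your Poisson-bracket reformulation, which the paper does not include.
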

\begin{proof}
We put \smash{$p = \sum_{j=1}^r p_j {\rm d}x_j$} and \smash{$\xi_i = \sum_{k=1}^r \xi_{ik}\frac{\pa}{\pa x_k}$}.
Then \smash{$H(x, p, u) = \sum_{1 \leq i, j \leq r} u_ip_j\xi_{ij}(x)$} and \smash{$H_{\xi_i} = \sum_{j=1}^r p_j\xi_{ij}(x)$}.
By the Hamiltonian equation, for $1 \leq i \leq r$, we have
\begin{align*}
\frac{{\rm d}}{{\rm d}t}H_{\xi_i}(t) & = \sum_{j=1}^r \bigl(p_j'\xi_{ij} + p_j\xi_{ij}'\bigr) =
\sum_{j=1}^r \Biggl(p_j'\xi_{ij} + \sum_{\ell =1}^r p_j\frac{\pa \xi_{ij}}{\pa x_\ell}x_{\ell}'\Biggr)
\\
& = \sum_{j=1}^r \Biggl(- \frac{\pa H}{\pa x_j}\xi_{ij}
+ \sum_{\ell =1}^r p_j\frac{\pa \xi_{ij}}{\pa x_\ell}\frac{\pa H}{\pa p_\ell}\Biggr)
 = - \sum_{k\ell j} u_kp_\ell \frac{\pa \xi_{k\ell}}{\pa x_j}\xi_{ij}
+ \sum_{j\ell k} p_j\frac{\pa \xi_{ij}}{\pa x_\ell}u_k\xi_{k\ell}
\\
& =
 - \sum_{k\ell j} u_kp_\ell \frac{\pa \xi_{k\ell}}{\pa x_j}\xi_{ij}
+ \sum_{\ell j k} p_\ell\frac{\pa \xi_{i\ell}}{\pa x_j}u_k\xi_{kj}
=
\sum_{k\ell} u_kp_\ell\Biggl(
\sum_{j=1}^r\biggl(
\xi_{ij}\frac{\pa \xi_{k\ell}}{\pa x_j} - \xi_{kj}\frac{\pa \xi_{i\ell}}{\pa x_j}
\biggr)
\Biggr)
\\
& = \sum_{k=1}^r u_k\langle p, [\xi_i, \xi_k]\rangle = \sum_{j=1}^r u_jH_{[\xi_i, \xi_j]}.
\tag*{\qed}
\end{align*}
\renewcommand{\qed}{}
\end{proof}

\begin{Remark}
We have defined the notion of abnormal (bi-)extremals and singular curves over the real.
In the complex analytic case $\KK = \C$, we can (and do) define abnormal (bi-)extremals and singular curves, forgetting
about end-point mapping,
just by the complex analytic constrained Hamiltonian equation for a complex analytic distribution $D \subset TM$
over a complex analytic manifold $M$, which is defined similarly as explained in this section.
\end{Remark}

\section[Conformal metric on Cartan's (8, 15)-distribution and singular velocity cone]{Conformal metric on Cartan's $\boldsymbol{(8, 15)}$-distribution\\ and singular velocity cone}
\label{Singular-velocity}

Let us determine the singular curves of Cartan's model $\bigl(\KK^{15}, D\bigr)$
explained in Section~\ref{Cartan-model}.
On the cotangent bundle $T^*\KK^{15}$ with base coordinates
$z$, $x_1$, $x_2$, $x_3$, $x_4$, $y_1$, $y_2$, $y_3$, $y_4$, $x_{ij}$, ${1 \leq i < j \leq 4}$ and
fiber coordinates
$s$, $p_1$, $p_2$, $p_3$, $p_4$, $q_1$, $q_2$, $q_3$, $q_4$, $r_{ij}$, $1 \leq i < j \leq 4$,
we have the Hamiltonian of the distribution $D \subset TX$,
\[
H = u_1H_{X_1} + u_2H_{X_2} + u_3H_{X_3} + u_4H_{X_4} + v_1H_{Y_1} + v_2H_{Y_2} + v_3H_{Y_3} + v_4H_{Y_4},
\]
where
\begin{alignat*}{3}
&H_{X_1} = p_1 + y_1s - x_2r_{12} - x_3r_{13} - x_4r_{14}, \qquad&&
H_{X_2} = p_2 + y_2s + x_1r_{12} - x_3r_{23} - x_4r_{24},&
\\
&H_{X_3} = p_3 + y_3s + x_1r_{13} + x_2r_{23} - x_4r_{34}, \qquad&&
H_{X_4} = p_4 + y_4s + x_1r_{14} + x_2r_{24} + x_3r_{34},&
\\
&H_{Y_1} = q_1 - y_4r_{23} + y_3r_{24} - y_2r_{34}, \qquad&&
H_{Y_2} = q_2 + y_4r_{13} - y_3r_{14} - y_1r_{34},&
\\
&H_{Y_3} = q_3 - y_4r_{12} + y_2r_{14} - y_1r_{24}, \qquad&&
H_{Y_4} = q_4 + y_3r_{12} - y_2r_{13} + y_1r_{23}.&
\end{alignat*}

The constrained Hamiltonian equation is given by
\begin{gather}
\begin{cases}
\dot{z} = u_1y_1 + u_2y_2 + u_3y_3 + u_4u_4,
\\
\dot{x}_1 = u_1, \quad \dot{x}_2 = u_2, \quad \dot{x}_3 = u_3, \quad \dot{x}_4 = u_4, \quad
\dot{y}_1 = v_1, \quad \dot{y}_2 = v_2, \quad \dot{y}_3 = v_3, \quad \dot{y}_4 = v_4,
\\
\dot{x}_{12} = - x_2u_1 + x_1u_2 - y_4v_3 + y_3v_4, \quad
\dot{x}_{13} = - x_3u_1 + x_1u_3 + y_4v_2 - y_2v_4,
\\
\dot{x}_{14} = - x_4u_1 + x_1u_4 - y_3v_2 + y_2v_3, \quad
\dot{x}_{23} = - x_3u_2 + x_2u_3 - y_4v_1 + y_1v_4,
\\
\dot{x}_{24} = - x_4u_2 + x_2u_4 + y_3v_1 - y_1v_3, \quad
\dot{x}_{34} = - x_4u_3 + x_3u_4 - y_2v_1 + y_1v_2,
\\
\dot{s} = 0, \ \dots \ \dots
\\
\dot{p}_1 = - u_2r_{12} - u_3r_{13} - u_4r_{14}, \quad \dot{p}_2 = u_1r_{12} - u_3r_{23} - u_4r_{24},
\\
\dot{p}_3 = u_1r_{13} + u_2r_{23} - u_4r_{34}, \quad \dot{p}_4 = u_1r_{14} + u_2r_{24} + u_3r_{34},
\\
\dot{q}_1 = - u_1s - v_2r_{34} + v_3r_{24} - v_4r_{23}, \quad \dot{q}_2 = - u_2s + v_1r_{34} - v_3r_{14} + v_4r_{13},
\\
\dot{q}_3 = - u_3s - v_1r_{24} + v_2r_{14} - v_4r_{12}, \quad \dot{q}_2 = - u_4s + v_1r_{23} - v_2r_{13} + v_3r_{12},
\\
\dot{r}_{12} = 0, \quad \dot{r}_{13} = 0, \quad \dot{r}_{14} = 0, \quad \dot{r}_{23} = 0, \quad \dot{r}_{24} = 0, \quad \dot{r}_{34} = 0,
\end{cases}\hspace{-10mm}
 \label{sharp}
\end{gather}
with constraints
\begin{alignat*}{5}
&H_{X_1} = 0, \qquad&& H_{X_2} = 0, \qquad&& H_{X_3} = 0, \qquad&& H_{X_4} = 0,& \\
&H_{Y_1} = 0, \qquad&& H_{Y_2} = 0, \qquad&& H_{Y_3} = 0, \qquad&& H_{Y_4} = 0,&
\end{alignat*}
and $s(t)$, $p_1(t)$, $p_2(t)$, $p_3(t)$, $p_4(t)$, $q_1(t)$, $q_2(t)$, $q_3(t)$, $q_4(t)$, $r_{ij}(t)$ are not all zero for any $t$.

By the constraints, if $s$, $r_{ij}$ are all zero, then $p_i$, $q_j$, $1 \leq i, j \leq 4$ are also zero.
So $s$, $r_{ij}$, $1 \leq i < j \leq 4$ must be not all zero.

\begin{Remark}
In Cartan's model, we have that $s$ and $r_{ij}$ are locally constant by the Hamiltonian equation.
However, we do not use this property in the following arguments.
\end{Remark}

For instance, from the constraint $H_{X_1} = 0$, we have, along any solution curve
by Lemma~\ref{Singular-curve-lemma}, that
\[
0 = \frac{{\rm d}}{{\rm d}t}H_{X_1} = \sum_{i=1}^4 u_iH_{[X_1, X_i]} + \sum_{j=1}^4 v_jH_{[X_1, Y_j]}.
\]

Then similarly from the constraint, we have the following equality in a general form:
\begin{gather*}
\begin{pmatrix}
0 \hspace{-2.3pt}&\hspace{-2.3pt} H_{[X_1, X_2]} \hspace{-2.3pt}&\hspace{-2.3pt} H_{[X_1, X_3]} \hspace{-2.3pt}&\hspace{-2.3pt} H_{[X_1, X_4]} \hspace{-2.3pt}&\hspace{-2.3pt} H_{[X_1, Y_1]} \hspace{-2.3pt}&\hspace{-2.3pt} H_{[X_1, Y_2]} \hspace{-2.3pt}&\hspace{-2.3pt} H_{[X_1, Y_3]} \hspace{-2.3pt}&\hspace{-2.3pt}
H_{[X_1, Y_4]}
\\
H_{[X_2, X_1]} \hspace{-2.3pt}&\hspace{-2.3pt} 0 \hspace{-2.3pt}&\hspace{-2.3pt} H_{[X_2, X_3]} \hspace{-2.3pt}&\hspace{-2.3pt} H_{[X_2, X_4]} \hspace{-2.3pt}&\hspace{-2.3pt} H_{[X_2, Y_1]} \hspace{-2.3pt}&\hspace{-2.3pt} H_{[X_2, Y_2]} \hspace{-2.3pt}&\hspace{-2.3pt} H_{[X_2, Y_3]} \hspace{-2.3pt}&\hspace{-2.3pt}
H_{[X_2, Y_4]}
\\
H_{[X_3, X_1]} \hspace{-2.3pt}&\hspace{-2.3pt} H_{[X_3, X_2]} \hspace{-2.3pt}&\hspace{-2.3pt} 0 \hspace{-2.3pt}&\hspace{-2.3pt} H_{[X_3 X_4]} \hspace{-2.3pt}&\hspace{-2.3pt} H_{[X_3, Y_1]} \hspace{-2.3pt}&\hspace{-2.3pt} H_{[X_3, Y_2]} \hspace{-2.3pt}&\hspace{-2.3pt} H_{[X_3, Y_3]} \hspace{-2.3pt}&\hspace{-2.3pt}
H_{[X_3, Y_4]}
\\
H_{[X_4, X_1]} \hspace{-2.3pt}&\hspace{-2.3pt} H_{[X_4, X_2]} \hspace{-2.3pt}&\hspace{-2.3pt} H_{[X_4 X_3]} \hspace{-2.3pt}&\hspace{-2.3pt} 0 \hspace{-2.3pt}&\hspace{-2.3pt} H_{[X_4, Y_1]} \hspace{-2.3pt}&\hspace{-2.3pt} H_{[X_4, Y_2]} \hspace{-2.3pt}&\hspace{-2.3pt} H_{[X_4, Y_3]} \hspace{-2.3pt}&\hspace{-2.3pt}
H_{[X_4, Y_4]}
\\
H_{[Y_1, X_1]} \hspace{-2.3pt}&\hspace{-2.3pt} H_{[Y_1, X_2]} \hspace{-2.3pt}&\hspace{-2.3pt} H_{[Y_1, X_3]} \hspace{-2.3pt}&\hspace{-2.3pt} H_{[Y_1, X_4]} \hspace{-2.3pt}&\hspace{-2.3pt} 0 \hspace{-2.3pt}&\hspace{-2.3pt} H_{[Y_1, Y_2]} \hspace{-2.3pt}&\hspace{-2.3pt} H_{[Y_1, Y_3]} \hspace{-2.3pt}&\hspace{-2.3pt}
H_{[Y_1, Y_4]}
\\
H_{[Y_2, X_1]} \hspace{-2.3pt}&\hspace{-2.3pt} H_{[Y_2, X_2]} \hspace{-2.3pt}&\hspace{-2.3pt} H_{[Y_2, X_3]} \hspace{-2.3pt}&\hspace{-2.3pt} H_{[Y_2, X_4]} \hspace{-2.3pt}&\hspace{-2.3pt} H_{[Y_2, Y_1]} \hspace{-2.3pt}&\hspace{-2.3pt} 0 \hspace{-2.3pt}&\hspace{-2.3pt} H_{[Y_2, Y_3]} \hspace{-2.3pt}&\hspace{-2.3pt}
H_{[Y_2, Y_4]}
\\
H_{[Y_3, X_1]} \hspace{-2.3pt}&\hspace{-2.3pt} H_{[Y_3, X_2]} \hspace{-2.3pt}&\hspace{-2.3pt} H_{[Y_3, X_3]} \hspace{-2.3pt}&\hspace{-2.3pt} H_{[Y_3, X_4]} \hspace{-2.3pt}&\hspace{-2.3pt} H_{[Y_3, Y_1]} \hspace{-2.3pt}&\hspace{-2.3pt} H_{[Y_3, Y_2]} \hspace{-2.3pt}&\hspace{-2.3pt} 0 \hspace{-2.3pt}&\hspace{-2.3pt}
H_{[Y_3, Y_4]}
\\
H_{[Y_4, X_1]} \hspace{-2.3pt}&\hspace{-2.3pt} H_{[Y_4, X_2]} \hspace{-2.3pt}&\hspace{-2.3pt} H_{[Y_4, X_3]} \hspace{-2.3pt}&\hspace{-2.3pt} H_{[Y_4, X_4]} \hspace{-2.3pt}&\hspace{-2.3pt} H_{[Y_4, Y_1]} \hspace{-2.3pt}&\hspace{-2.3pt} H_{[Y_4, Y_2]} \hspace{-2.3pt}&\hspace{-2.3pt} H_{[Y_4, Y_3]} \hspace{-2.3pt}&\hspace{-2.3pt} 0
\hspace{-1.5pt}\end{pmatrix} \hspace{-3.5pt}
\begin{pmatrix}
u_1
\\
u_2
\\
u_3
\\
u_4
\\
v_1
\\
v_2
\\
v_3
\\
v_4
\end{pmatrix} \!=\!
\begin{pmatrix}
0
\\
0
\\
0
\\
0
\\
0
\\
0
\\
0
\\
0
\end{pmatrix}\!.
\end{gather*}
Explicitly, we have in fact
\begin{gather}
\begin{pmatrix}
\hphantom{-}0 & \hphantom{-}2r_{12} & \hphantom{-}2r_{13} & 2r_{14} & -s & \hphantom{-}0 & \hphantom{-}0 & \hphantom{-}0
\\
-2r_{12} & \hphantom{-}0 & \hphantom{-}2r_{23} &2r_{24} & \hphantom{-}0 & -s & \hphantom{-}0 & \hphantom{-}0
\\
-2r_{13} & -2r_{23} & \hphantom{-}0 & 2r_{34} & \hphantom{-}0 & \hphantom{-}0 & -s & \hphantom{-}0
\\
-2 r_{14} & -2r_{24} & -2r_{34} & 0 & \hphantom{-}0 & \hphantom{-}0 & \hphantom{-}0 & -s
\\
\hphantom{-}s & \hphantom{-}0 & \hphantom{-}0 & 0 & \hphantom{-}0 & \hphantom{-}2r_{34} & -2r_{24} & \hphantom{-}2r_{23}
\\
\hphantom{-}0 & \hphantom{-}s & \hphantom{-}0 & 0 & -2r_{34} & \hphantom{-}0 & \hphantom{-}2r_{14} & -2r_{13}
\\
\hphantom{-}0 & \hphantom{-}0 & \hphantom{-}s & 0 & \hphantom{-}2r_{24} & -2r_{14} & \hphantom{-}0 & \hphantom{-}2r_{12}
\\
\hphantom{-}0 & \hphantom{-}0 & \hphantom{-}0 & s & -2r_{23} & \hphantom{-}2r_{13} & -2r_{12} & \hphantom{-}0
\end{pmatrix}
\begin{pmatrix}
u_1
\\
u_2
\\
u_3
\\
u_4
\\
v_1
\\
v_2
\\
v_3
\\
v_4
\end{pmatrix} =
\begin{pmatrix}
0
\\
0
\\
0
\\
0
\\
0
\\
0
\\
0
\\
0
\end{pmatrix}. \label{star}
\end{gather}
Equivalently, we have
\begin{gather}
\begin{pmatrix}
-v_1 & \hphantom{-}2u_2 & \hphantom{-}2u_3 & \hphantom{-}2u_4 & \hphantom{-}0 & \hphantom{-}0 & \hphantom{-}0
\\
-v_2 & -2u_1 & \hphantom{-}0 & \hphantom{-}0 & \hphantom{-}2u_3 & \hphantom{-}2u_4 & \hphantom{-}0
\\
-v_3 & \hphantom{-}0 & -2u_1 & \hphantom{-}0 & -2u_2 & \hphantom{-}0 & \hphantom{-}2u_4
\\
-v_4 & \hphantom{-}0 & \hphantom{-}0 & -2u_1 & \hphantom{-}0 & -2u_2 & -2u_3
\\
\hphantom{-}u_1 & \hphantom{-}0 & \hphantom{-}0 & \hphantom{-}0 & \hphantom{-}2v_4 & -2v_3 & \hphantom{-}2v_2
\\
\hphantom{-}u_2 & \hphantom{-}0 & -2v_4 & \hphantom{-}2v_3 & \hphantom{-}0 & \hphantom{-}0 & -2v_1
\\
\hphantom{-}u_3 & \hphantom{-}2v_4 & \hphantom{-}0 & -2v_2 & \hphantom{-}0 & \hphantom{-}2v_1 & \hphantom{-}0
\\
\hphantom{-}u_4 & -2v_3 & \hphantom{-}2v_2 & \hphantom{-}0 & -2v_1 & \hphantom{-}0 & \hphantom{-}0
\end{pmatrix}
\begin{pmatrix}
s
\\
r_{12}
\\
r_{13}
\\
r_{14}
\\
r_{23}
\\
r_{24}
\\
r_{34}
\end{pmatrix} =
\begin{pmatrix}
0
\\
0
\\
0
\\
0
\\
0
\\
0
\\
0
\\
0
\end{pmatrix}.\label{star-star}
\end{gather}

Write \eqref{star} as
\[
\begin{pmatrix}
A_{11} & -sI
\\
sI & A_{22}
\end{pmatrix}
\begin{pmatrix}
u
\\
v
\end{pmatrix} =
\begin{pmatrix}
0
\\
0
\end{pmatrix},
\]
where $u = {}^t(u_1, u_2, u_3, u_4)$, $v = {}^t(v_1, v_2, v_3, v_4)$ and $I$ is the $4\times 4$ unit matrix.
We denote by~$A$ the skew-symmetric $8\times 8$ matrix
\smash{$\bigl(
\begin{smallmatrix}
A_{11} & -sI
\\
sI & A_{22}
\end{smallmatrix}
\bigr)$}
and by $U$ the $8\times 7$ matrix which appeared in \eqref{star} and~\eqref{star-star}, respectively.

Then the condition \eqref{star} is equivalent to that
$A_{11}u = sv$, $A_{22}v = -su$. Note that $\det(A_{11}) = \det(A_{22}) = \{4(r_{12}r_{34} - r_{13}r_{24} + r_{14}r_{23})\}^2$
and that
$A_{11}A_{22} = A_{22}A_{11} = -4(r_{12}r_{34} - r_{13}r_{24} + r_{14}r_{23})I$. Then
the condition \eqref{star} implies that
\[
\bigl\{ s^2 - 4(r_{12}r_{34} - r_{13}r_{24} + r_{14}r_{23})\bigr\}u = 0, \qquad \bigl\{ s^2 - 4(r_{12}r_{34} - r_{13}r_{24} + r_{14}r_{23})\bigr\}v = 0.
\]
Therefore, if $(u, v) \not= (0, 0)$, then we have
\[
s^2 - 4(r_{12}r_{34} - r_{13}r_{24} + r_{14}r_{23}) = 0.
\]
Suppose $s \not= 0$. Then, since $A_{11}$ is skew-symmetric, we have that
${}^tu\cdot v = \smash{\frac{1}{s}\,{}^tu\cdot(A_{11}u)} = \smash{\frac{1}{s}\,({}^tuA_{11})\cdot u} = \smash{\frac{1}{s}\,{}^t({}^tA_{11}u)u} = - \smash{\frac{1}{s}\,{}^t(A_{11}u)u} = - {}^tv\cdot u = - {}^tu\cdot v$.
Therefore, we have that
\[
{}^tu \cdot v = u_1v_1 + u_2v_2 + u_3v_3 + u_4v_4 = 0.
\]
Suppose $s = 0$. Then $A_{11}u = 0$ and $A_{22}v = 0$. Note that $A_{11}A_{22} = A_{22}A_{11} = 0$.
Since~$A_{11}$ and~$A_{22}$ are non-zero and skew-symmetric, we have $\rank(A_{11}) = 2$, $\rank(A_{22}) = 2$, and
therefore ${\rm Ker}(A_{11}) = {\mbox{\rm Im}}(A_{22})$ and ${\mbox{\rm Im}}(A_{11}) = {\rm Ker}(A_{22})$.
Then we have $u = A_{22}\tilde{u}$ and $v = A_{11}\tilde{v}$ for some~$\tilde{u}$,~$\tilde{v}$, and thus
${}^tu\cdot v = {}^t(A_{22}\tilde{u})\cdot A_{11}\tilde{v} = {}^t\tilde{u}\,{}^t\!A_{22}A_{11}\tilde{v} = - {}^t\tilde{u}A_{22}A_{11}\tilde{v} = 0$.

\begin{Proposition}
The singular velocity cone $\SVC(D)$ of Cartan's model $D$ is given by
\[
\SVC(D) = \Biggl\{ \sum_{i=1}^4 u_iX_i + \sum_{j=1} v_jY_j \,
\Bigg\vert\, u_1v_1 + u_2v_2 + u_3v_3 + u_4v_4 = 0\Biggr\}.
\]
\end{Proposition}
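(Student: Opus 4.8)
I would prove the two inclusions separately; the inclusion $\SVC(D)\subseteq\{\,\cdots\,\}$ is essentially the computation already carried out just above the statement. Indeed, if $v\in T_x\KK^{15}$ is the velocity at $0$ of a $D$-singular curve $\gamma$, then by the equivalence of $D$-singular curves with abnormal extremals recalled in Section~\ref{Abnormal-bi-extremals-general}, $\gamma$ lifts to an abnormal bi-extremal $(x(t),p(t))$ with controls $(u(t),v(t))$ solving \eqref{sharp} subject to the constraints $H_{X_i}=H_{Y_j}=0$ and $(s,p_i,q_j,r_{ij})\neq 0$. Differentiating the constraints and applying Lemma~\ref{Singular-curve-lemma} gives, at each $t$, the linear relation \eqref{star} for $(u(t),v(t))$, and the block analysis performed above -- using $A_{11}A_{22}=A_{22}A_{11}=-4(r_{12}r_{34}-r_{13}r_{24}+r_{14}r_{23})I$ together with the skew-symmetry of $A_{11}$ when $s\neq 0$, and the rank-$2$ structure $\Ker A_{11}=\operatorname{Im}A_{22}$ when $s=0$ -- yields $u_1(t)v_1(t)+u_2(t)v_2(t)+u_3(t)v_3(t)+u_4(t)v_4(t)=0$. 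Evaluating at $t=0$ (the case $(u(0),v(0))=0$, i.e.\ $v=0$, being trivial) places $v$ in the right-hand side.

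For the reverse inclusion, fix $x_0$ and $w_0=\sum_{i=1}^4 u^0_iX_i(x_0)+\sum_{j=1}^4 v^0_jY_j(x_0)$ with $\sum_i u^0_iv^0_i=0$; I may assume $w_0\neq 0$, since the zero vector already lies in $\SVC(D)$ (reparametrising any singular curve through $x_0$ by $t\mapsto t^3$ makes its velocity vanish at $0$). The plan is: (i) choose constants $s,r_{ij}$, not all zero, so that $(u^0,v^0)$ solves \eqref{star}, equivalently so that $(s,r_{12},\dots,r_{34})$ lies in $\Ker$ of the matrix $U$ of \eqref{star-star}; (ii) let $p_0\in T^*_{x_0}\KK^{15}$ be the covector with fibre-coordinates these $s,r_{ij}$ and with $p_i,q_j$ fixed by $H_{X_i}(x_0,p_0)=0$, $H_{Y_j}(x_0,p_0)=0$ (each equation solves uniquely and linearly for one $p_i$, resp.\ $q_j$), so that $p_0\in D^\perp_{x_0}\setminus\{0\}$; (iii) integrate the Hamiltonian system \eqref{sharp} with the frozen controls $(u(t),v(t))\equiv(u^0,v^0)$ and initial datum $(x_0,p_0)$. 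Since $\dot s=0$ and $\dot r_{ij}=0$ along \eqref{sharp}, the matrix $A$ of \eqref{star} is constant along $(x(t),p(t))$, so $(u^0,v^0)\in\Ker A$ for every $t$; hence, by the formula of Lemma~\ref{Singular-curve-lemma} (whose proof uses only the Hamiltonian equations, not the constraints), $\tfrac{{\rm d}}{{\rm d}t}H_{X_i}(x(t),p(t))=\tfrac{{\rm d}}{{\rm d}t}H_{Y_j}(x(t),p(t))=0$, and as these vanish at $t=0$ they vanish identically, while $p(t)\neq 0$ because $(s,r_{ij})\neq 0$. Thus $(x(t),p(t))$ is an abnormal bi-extremal, so $x(t)$ is a $D$-singular curve, and $\dot x(0)=w_0$, as required.

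Everything here is formal except step (i), which is the only place where $\sum_i u^0_iv^0_i=0$ is used and which I regard as the main obstacle (the other, softer, point being that the above persistence argument works so cleanly precisely because $s,r_{ij}$ are flow-invariant for Cartan's model, which would fail for a general $(8,15)$-distribution). For step (i) I would take $s=0$ and solve $A_{11}(r)u^0=0$, $A_{22}(r)v^0=0$. Using that $r\mapsto A_{22}(r)$ is a linear isomorphism onto the skew $4\times 4$ matrices, choose a $2$-plane $K'\subseteq\KK^4$ with $v^0\in K'\subseteq (u^0)^{\perp}$ -- possible since $\langle u^0,v^0\rangle=0$ and $\dim(u^0)^{\perp}=3$ -- and take $r\neq 0$ with $\Ker A_{22}(r)=K'$ (any $2$-plane is the radical of some skew $2$-form, necessarily decomposable, so automatically $r_{12}r_{34}-r_{13}r_{24}+r_{14}r_{23}=0$, whence $\det A_{11}(r)=\det A_{22}(r)=0$ and $A_{11}(r)A_{22}(r)=0$). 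Then $A_{22}(r)v^0=0$, and since $A_{11},A_{22}$ are skew of rank $2$ one gets $\Ker A_{11}(r)=\operatorname{Im}A_{22}(r)=(\Ker A_{22}(r))^{\perp}=(K')^{\perp}\ni u^0$, so $A_{11}(r)u^0=0$ as well; hence \eqref{star} holds for $(u^0,v^0)$ with $s=0$. (If $u^0=0$ or $v^0=0$, take $K'$ to be, respectively, any $2$-plane through $v^0$, or any $2$-plane orthogonal to $u^0$.)
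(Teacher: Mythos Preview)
Your proof is correct. The forward inclusion and the integration steps (ii)--(iii) match the paper's argument (the paper compresses (ii)--(iii) into a single sentence, whereas you spell out why the constraints persist via Lemma~\ref{Singular-curve-lemma} and the flow-invariance of $s,r_{ij}$; your version is more informative).

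The genuine difference is in step~(i). The paper does not construct $(s,r_{ij})$ explicitly: it observes that, when $Q(u^0,v^0)=0$, the seven columns of $U(u^0,v^0)$ are null and mutually orthogonal for the $(4,4)$-form on $\KK^8$, hence span a totally isotropic subspace of dimension $\le 4$, so $\rank U\le 4<7$ and $\Ker U\ne\{0\}$. This gives existence of $(s,r_{ij})\ne 0$ in one stroke, with no case analysis and no preference for $s=0$. Your route---fixing $s=0$, realising any $2$-plane $K'\subseteq(u^0)^{\perp}$ containing $v^0$ as $\Ker A_{22}(r)$ via the isomorphism $r\mapsto A_{22}(r)$ onto $\mathfrak{so}(4)$, and then reading off $\Ker A_{11}(r)=(K')^{\perp}\ni u^0$---is more constructive and makes transparent the pure-spinor/decomposable picture hinted at in Remark~\ref{Remark-representation-theory}. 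The paper's argument is shorter and treats $s\ne 0$ and $s=0$ uniformly; yours pins down a specific $r$ and foreshadows the null-$3$-plane correspondence of Section~\ref{Null-flags} (indeed your $r$ automatically satisfies $r_{12}r_{34}-r_{13}r_{24}+r_{14}r_{23}=0$, i.e., lies in $R^{-1}(0)$).
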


\begin{proof}
That $\SVC(D)$ is contained in the right hand side is already shown. Let us show the converse inclusion.
All columns of the $8\times 7$ matrix $U$ which appeared in \eqref{star-star} are
null and orthogonal to each other with respect to the metric ${}^tu\cdot v =
u_1v_1 + u_2v_2 + u_3v_3 + u_4v_4$ on $\KK^8$. Note that the metric is non-degenerate for $\KK = \C$
and is of signature $(4, 4)$ if $\KK = \R$.
In any case we have that $\rank(U) \leq 4 < 7$, because the subspace generated by all columns of $U$
is a~null space in $\KK^8$ with respect to the metric ${}^tu\cdot v$.
Hence, for any non-zero constant vector~$(u, v)$ with~${{}^tu\cdot v = 0}$, there exists $(s, r_{ij}) \not= 0$ such that \eqref{star-star} holds, and therefore that~\eqref{star} holds. Thus we see that, given non-zero $(u, v)$ with ${}^tu\cdot v = 0$,
there exist constants $s$, $p_i$, $1 \leq i \leq 4$, $q_j$, $1 \leq j \leq 4$, $r_{ij}$, $1 \leq i < j \leq 4$, which are not all zero,
and functions $x_i$, $1 \leq i \leq 4$, $y_j$, $1 \leq j \leq 4$, $x_{ij}$, $1 \leq i < j \leq 4$
such that the linear ordinary differential equation \eqref{sharp} for singular curves is satisfied.
Thus we see the required equality.
\end{proof}

We define a quadratic form $Q$ on $\KK^8$ and $R$ on $\KK^7$, respectively, by
\[
Q(u, v) := u_1v_1 + u_2v_2 + u_3v_3 + u_4v_4, \qquad R(s, r_{ij}) := s^2 - 4(r_{12}r_{34} - r_{13}r_{24} + r_{14}r_{23}).
\]
The quadratic form $Q$ induces the bi-linear form
\[
\bigl((u, v), \bigl(u', v'\bigr)\bigr) =
\frac{1}{2}\bigl( u_1v_1' + v_1u_1' + u_2v_2' + v_2u_2' + u_3v_3' + v_3u_3' + u_4v_4' + v_4u_4'\bigr)
\]
on $\KK^8 \times \KK^8$. Moreover, the quadratic form $R$ induces the bilinear form
\[
\bigl(\bigl(s, r_{ij}\bigr) \vert \bigl(s', r_{ij}'\bigr)\bigr) = ss' - 2\bigl( r_{12}r_{34}' + r_{34}r_{12}' - r_{13}r_{24}' + r_{24}r_{13}' + r_{14}r_{23}' + r_{23}r_{14}'\bigr)
\]
on $\KK^7\times \KK^7$.

\begin{Corollary}
\label{Conformal-metrics}
The distribution $D \subset T\KK^{15}$ has the canonical non-degenerate metric $(\, ,\, )$ for ${\KK = \C}$ and
the canonical conformal $(4, 4)$-metric $(\, ,\, )$ for $\KK = \R$.
The distribution ${D^\perp \subset T^*\KK^{15}}$ has the canonical non-degenerate metric~$(\, \vert \, )$ for $\KK = \C$ and
the canonical conformal $(4, 3)$-metric~$(\, \vert \, )$ for $\KK = \R$.
\end{Corollary}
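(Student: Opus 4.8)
The plan is to read the two conformal structures directly off the two $D$-intrinsic cones produced in this section. By the preceding Proposition, the fibre of $\SVC(D)$ over any point is exactly the zero set of the quadratic form $Q$ in the frame $X_1,\dots,X_4,Y_1,\dots,Y_4$ of the corresponding fibre of $D$. Dually, since abnormal bi-extremals depend only on $D$ and never on a metric, the set of covectors $p(0)\in D^\perp$ arising as initial values of abnormal bi-extremals $(x(t),p(t))$ with non-vanishing velocity $\dot x(0)$ is again intrinsic to $D$; I would show that in the coordinates $s,r_{ij}$ on each fibre $D^\perp_x$ obtained by solving the constraints $H_{X_i}=H_{Y_j}=0$ for $p_i,q_j$, this latter cone is exactly the zero set of $R$.

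One inclusion for $D^\perp$ is already contained in the computation above: along an abnormal bi-extremal with $(u,v)=\dot x\neq 0$, equation \eqref{star} forces $\{s^2-4(r_{12}r_{34}-r_{13}r_{24}+r_{14}r_{23})\}(u,v)=0$, hence $R=0$. For the converse, write \eqref{star} as $A\cdot{}^t(u,v)=0$ with $A=\bigl(\begin{smallmatrix}A_{11}&-sI\\ sI&A_{22}\end{smallmatrix}\bigr)$; using $A_{11}A_{22}=A_{22}A_{11}=-4(r_{12}r_{34}-r_{13}r_{24}+r_{14}r_{23})I$ together with $\det A_{11}=\det A_{22}=\{4(r_{12}r_{34}-r_{13}r_{24}+r_{14}r_{23})\}^2$, a short block computation gives $\det A=R(s,r_{ij})^4$. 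Hence $R(s,r_{ij})=0$ makes $A$ singular, so some $(u,v)\neq 0$ solves \eqref{star}; integrating the linear system \eqref{sharp} with these constant $(s,r_{ij})$ and $(u,v)$ (and the $p_i,q_j$ determined by the constraints) produces an honest abnormal bi-extremal through the prescribed covector with $\dot x(0)=(u,v)\neq 0$. So the $D^\perp$-cone coincides fibrewise with $\{R=0\}$.

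Next I would verify non-degeneracy and signatures. The Gram matrix of $Q$ in the frame $(X_i,Y_j)$ is $\frac{1}{2}\bigl(\begin{smallmatrix}0&I\\ I&0\end{smallmatrix}\bigr)$, and that of $R$ in the ordered basis $s,r_{12},r_{34},r_{13},r_{24},r_{14},r_{23}$ is block-diagonal with one block $[1]$ and three invertible $2\times 2$ anti-diagonal blocks; both are invertible, so $Q$ and $R$ are non-degenerate. Over $\R$, $Q=\sum_{i=1}^4 u_iv_i$ is an orthogonal sum of four hyperbolic planes, hence of signature $(4,4)$, while $R=s^2-4(r_{12}r_{34}-r_{13}r_{24}+r_{14}r_{23})$ is an orthogonal sum of a positive line and three hyperbolic planes, hence of signature $(4,3)$; over $\C$ each is the unique non-degenerate form in its dimension. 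Finally, a non-degenerate quadratic form in at least three variables over an infinite field of characteristic $\neq 2$ is determined up to a nonzero scalar by its zero locus (its Zariski closure is the affine quadric, whose ideal is principal and generated by the form); over $\R$ one uses that the real null cone of an indefinite form is Zariski-dense in the complex quadric, which applies here since $(4,4)$ and $(4,3)$ are indefinite. Applying this in each fibre to the two $D$-intrinsic cones shows that the fibrewise forms $Q$ on $D$ and $R$ on $D^\perp$ are pinned down by $D$ up to a fibrewise nonzero scalar, i.e.\ $D$ and $D^\perp$ carry canonical conformal structures, of signature $(4,4)$ and $(4,3)$ respectively over $\R$ and represented by the non-degenerate metrics $(\,,\,)$ and $(\,\vert\,)$ defined above over $\C$; this is the assertion. (Representation-theoretically this is the spin-to-vector passage for $\mathrm{Spin}(7)$, compatible with the $F_4$-grading.)

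The step I expect to be the main obstacle is precisely the canonicity claim: one must make sure both cones are genuinely frame-independent and, crucially, that the \emph{whole} null cone — not merely part of it — is realised, which is exactly why the determinant identity $\det A=R^4$ and the integration of \eqref{sharp} are needed on the $D^\perp$ side, whereas for $D$ both inclusions are already supplied by the Proposition. Once this is secured, the signature bookkeeping is routine.
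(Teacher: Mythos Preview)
Your argument is correct and is the natural fleshing-out of what the paper leaves implicit: in the text the Corollary is stated without proof, immediately after the Proposition identifying $\SVC(D)$ with the null cone of $Q$ and the computation showing that nontrivial $(u,v)$ forces $R(s,r_{ij})=0$. The paper evidently regards the conformal structures as read off directly from these two intrinsic cones together with the explicit formulas for $Q$ and $R$, without spelling out the converse inclusion on the $D^\perp$ side or the ``null cone determines the conformal class'' step.

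Your additions relative to the paper are the identity $\det A = R(s,r_{ij})^4$ (which the paper never states; it only records $A_{11}A_{22}=-4(r_{12}r_{34}-r_{13}r_{24}+r_{14}r_{23})I$ and later, in Section~\ref{Null-flags}, that $A$ has rank~$4$ on $R^{-1}(0)\setminus\{0\}$), the explicit integration of \eqref{sharp} to realise every null covector by an abnormal bi-extremal with nonzero velocity, and the standard lemma that an indefinite non-degenerate quadratic form is determined up to scalar by its null cone. These make the canonicity claim watertight, whereas the paper is content to display the two quadratic forms and assert the result. Your approach and the paper's are therefore the same in spirit; yours is simply more complete.
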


\begin{Remark}
\label{Remark-representation-theory}
Let $G = F_{4(4)}$, $P = P_{\alpha_4}$, the parabolic subgroup of $F_{4(4)}$
corresponding to the root~$\alpha_4$,
$X = G/P_{\alpha_4} = \OO' P^2_0$, that is
the hyperplane section of the split Cayley projective space $\OO' P^2$ and $H = \Spin(4, 3)$.
Then we have the decomposition $TG = T_1\oplus T_2$ into $H$-modules, where~$T_1$ (resp.\ $T_2$)
is regarded as the $8$-dimensional spin representation of $\Spin(4, 3)$; $T_1 \cong \OO'$,
(resp.\ the $7$-dimensional vector representation; $T_2 \cong {\mbox{\rm {Im}}}\OO'$).
Moreover, the closed $H$-orbit $Y_1 \subset \PP(T_1)$ (resp.\ $Y_2 \subset \PP(T_2)$)
is a $6$-dimensional quadric (resp.\ is a $5$-dimensional quadric)
with a conformal structure of type $(3, 3)$ (resp.\ of type $(3, 2)$) (see \cite[Section~6.3]{LM}). See
also~\cite[Section~2]{LM} and \cite{BE, IL} for general constructions in simple Lie algebras.

Consider the Clifford algebra ${\mbox{\rm{Cl}}}(4,3) \supset T_1$.
Let ${\mathcal N}$ be the totality of $3$-dimensional null subspaces in $T_1$.
We set $N_s := \{ z \in T_2 \mid z(s) = 0\}$ for $s \in T_1$.
If $N_s \in {\mathcal N}$, $s$ is called a pure spinor. Denote by ${\rm PS}(4, 3)$
the set of pure spinors and by $\PP({\rm PS}(4, 3))$ its projectivisation. Then the correspondence $[s] \in \PP({\rm PS}(4, 3)) \mapsto N_s \in {\mathcal N}$ turns to be an isomorphism. See, for instance, \cite[p.~241 and~p.~283]{Harvey}.

Now in the left hand side of the equality \eqref{star} in our argument in this section,
the action $\uuu = {}^t(u, v) \mapsto A\uuu$
corresponds to the spinor representation of $T_2 \subset {\mbox{\rm Cl}}(4, 3)$ to $T_1$.
Moreover, we see that the set $D$ of solutions
$\uuu$ to the equation $A\uuu = \0$
is exactly equal to the set ${\rm PS}(4, 3)$ of pure spinors.
Thus we see that $D = T_1$ and that {$\SVC(D) \cong \widehat{Y}_1 \cong {\rm PS}(4,3)$}.
Therefore, invariant cone $\widehat{Y}_1$ is constructed from $D = T_1$ algebraically from the viewpoint of
representation theory. Further
$D^\perp = (TX/T_1)^* = T_2^* (\subset T^*X)$ has the $H$-invariant $(4, 3)$-metric.
In this paper, we have characterised these objects known in representation theory by using singular curves
from the viewpoint of geometric control theory.
\end{Remark}

\section{Null flags associated to abnormal bi-extremals}
\label{Null-flags}

We continue to analyse the equation \eqref{star-star} appeared in the previous section.
Recall the $8\times7$ matrix~$U$ which appeared in \eqref{star-star}.
Write \smash{$U = \bigl( \begin{smallmatrix}U' \\ U''\end{smallmatrix}\bigr)$} using $4\times 7$ matrices $U'$, $U''$.
Then we have that
\[
{}^tU U = \bigl({}^tU'' \, {}^t U'\bigr)\begin{pmatrix}U' \\ U''\end{pmatrix}
=
\left(
\begin{array}{ccccccc}
-2Q & 0 & \hphantom{-}0 & 0 & 0 & \hphantom{-}0 & 0
\\
\hphantom{-}0 & 0 & \hphantom{-}0 & 0 & 0 & \hphantom{-}0 & 4Q
\\
\hphantom{-}0 & 0 & \hphantom{-}0 & 0 & 0 & -4Q & 0
\\
\hphantom{-}0 & 0 & \hphantom{-}0 & 0 & 4Q & \hphantom{-}0 & 0
\\
\hphantom{-}0 & 0 & \hphantom{-}0 & 4Q & 0 & \hphantom{-}0 & 0
\\
\hphantom{-}0 & 0 & -4Q & 0 & 0 & \hphantom{-}0 & 0
\\
\hphantom{-}0 & 4Q & \hphantom{-}0 & 0 & 0 & \hphantom{-}0 & 0
\end{array}
\right),
\]
where $Q = u_1v_1 + u_2v_2 + u_3v_3 + u_4v_4$. Note that $\det\bigl({}^tU U\bigr) = 2^{13} Q^8$.

If $Q \not= 0$, then $\rank(U) = 7$. If $Q = 0$, then, since
${}^tU U = O$, regarding $U \colon \KK^7 \to \KK^8$ and
${}^tU \colon \KK^8 \to \KK^7$,
we have that ${\mbox{\rm Im}}(U) \subseteq {\rm Ker}\bigl({}^tU\bigr)$,
so that $\rank(U) \leq 8 - \rank(U)$. Thus we have $\rank(U) \leq 4$ again.
Moreover, if we set
$R = s^2 - 4(r_{12}r_{34} - r_{13}r_{24} + r_{14}r_{23})$, then
we have that if $(u, v) \not= (0, 0)$ and $Q = 0$, then ${\rm Ker}(U) \subseteq R^{-1}(0)$.
So we have ${\rm Ker}(U)$ is a null subspace for the non-degenerate
metric $R'$ induced by the quadratic form $R$ and that
$\dim{\rm Ker}(U) \leq 3$. Thus we have, in fact, $\rank(U) = 4$ and $\dim{\rm Ker}(U) = 3$, if $(u, v) \not= (0, 0)$ and $Q = 0$.
Therefore, we observe that, for any (null) line in $Q^{-1}(0)$, there corresponds a null $3$-pace in $R^{-1}(0)$.
Conversely, for any null $3$-space in $R^{-1}(0)$, there corresponds a null line in $Q^{-1}(0)$.
However, for any null line in $R^{-1}(0)$, naturally there corresponds,
not a null $3$-space, but a null $4$-space in $Q^{-1}(0)$
by the equation \eqref{star}, since, on $R^{-1}(0) \setminus \{ 0\}$, we see $\det(A_{11}) \not= 0$ and the matrix
$A$ is of rank $4$.

In fact we have

\begin{Lemma}
To any null-flag $\Lambda_1 \subset \Lambda_2 \subset \Lambda_3 \subset R^{-1}(0)$
for $R = s^2 - 4(r_{12}r_{34} - r_{13}r_{24} + r_{14}r_{23})$, where $\dim(\Lambda_i) = i$, $i = 1, 2, 3$,
there corresponds uniquely, by the equation \eqref{star}, a null-flag $V_1 \subset V_2 \subset V_4 \subset Q^{-1}(0)$ for
$Q = u_1v_1 + u_2v_2 + u_3v_3 + u_4v_4$, where $\dim(V_k) = k$, $k = 1, 2, 4$.
\end{Lemma}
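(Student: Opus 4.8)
The plan is to make the three correspondences hidden in the statement explicit, show that they are forced and automatically nested, and then reduce the dimension count to a single model flag.

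Writing $A(\lambda)=A(s,r_{ij})$ for $\lambda=(s,r_{12},r_{13},r_{14},r_{23},r_{24},r_{34})\in\KK^7$, I would recall from the discussion preceding the lemma that $\lambda\mapsto A(\lambda)$ is $\KK$-linear, that $A(\lambda)$ has rank $4$ with $\Ker A(\lambda)$ a $4$-dimensional $Q$-null subspace for every $\lambda\in R^{-1}(0)\setminus\{0\}$, and (cf.\ Remark~\ref{Remark-representation-theory}) that $\lambda\mapsto A(\lambda)$ is, up to a nonzero constant, Clifford multiplication by $\lambda$ on the spin module $\KK^8$, so that $A(\lambda)^2$ is a constant multiple of $R(\lambda)\,I$ and $A(\lambda)A(\mu)+A(\mu)A(\lambda)$ a constant multiple of $B_R(\lambda,\mu)\,I$ for the polar form $B_R$ of $R$. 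Hence, whenever $\lambda$ is a nonzero vector of a totally $R$-null subspace, $A(\lambda)^2=0$ and $\operatorname{Im}A(\lambda)=\Ker A(\lambda)$, and any two such vectors $\lambda,\mu$ give $A(\lambda)A(\mu)=-A(\mu)A(\lambda)$. Given the flag, choose a basis $\lambda_1,\lambda_2,\lambda_3$ of $\Lambda_3$ with $\langle\lambda_1\rangle=\Lambda_1$, $\langle\lambda_1,\lambda_2\rangle=\Lambda_2$, and put
\[
V_4:=\Ker A(\lambda_1),\qquad V_2:=\Ker A(\lambda_1)\cap\Ker A(\lambda_2),\qquad V_1:=\Ker A(\lambda_1)\cap\Ker A(\lambda_2)\cap\Ker A(\lambda_3).
\]
By linearity of $A$ these equal $\bigcap_{\mu\in\Lambda_1}\Ker A(\mu)$, $\bigcap_{\mu\in\Lambda_2}\Ker A(\mu)$, $\bigcap_{\mu\in\Lambda_3}\Ker A(\mu)$, so they depend only on the flag (this is the asserted uniqueness), they are automatically nested $V_1\subset V_2\subset V_4$, and — since $V_4\subset Q^{-1}(0)$ and any subspace of a $Q$-null space is $Q$-null — all three are null. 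Thus the whole content of the lemma is the dimension count $\dim V_4=4$, $\dim V_2=2$, $\dim V_1=1$.

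The first equality is already known. For the other two I would use equivariance. The orthogonal group of $R$ acts transitively on totally $R$-null flags of type $(1,2,3)$ — such flags being flags of maximal isotropic subspaces, since $3$ is the Witt index of $R$ — by Witt's extension theorem; over $\R$ this already holds for the identity component, the relevant flag variety being connected. Moreover, for $g$ in the spin group lying over $\bar g\in SO(R)$ one has $A(\bar g\lambda)=\rho(g)\,A(\lambda)\,\rho(g)^{-1}$ with $\rho$ the spin representation on $\KK^8$, hence $\Ker A(\bar g\lambda)=\rho(g)\,\Ker A(\lambda)$; so the construction is equivariant and $\dim V_1$, $\dim V_2$ are independent of the flag. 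It then suffices to treat one convenient case: take $\lambda_1,\lambda_2,\lambda_3$ to be the vectors in $\KK^7$ whose coordinate tuples $(s,r_{12},r_{13},r_{14},r_{23},r_{24},r_{34})$ are $(0,1,0,0,0,0,0)$, $(0,0,1,0,0,0,0)$, $(0,0,0,1,0,0,0)$. Since $R=s^2-4(r_{12}r_{34}-r_{13}r_{24}+r_{14}r_{23})$ vanishes identically on their span, this is a totally null flag, the matrices $A(\lambda_i)$ in \eqref{star} become very sparse, and reading off the kernels and their successive intersections yields $\dim V_4=4$, $\dim V_2=2$, $\dim V_1=1$, together with the nesting.

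I expect the reduction step to be the main obstacle — the $\Spin(R)$-equivariance of $\lambda\mapsto A(\lambda)$ together with the Witt transitivity on null flags. One can instead argue intrinsically: each $A(\mu)$ with $\mu\in\Lambda_3$ maps $V_4=\operatorname{Im}A(\lambda_1)$ into itself by the anticommutation relations, restricting to a square-zero endomorphism of the $4$-dimensional space $V_4$ whose kernel is $\Ker A(\mu)\cap V_4$; it then remains to rule out, using the anticommutation relations together with the injectivity of $A$, that $A(\lambda_2)|_{V_4}$ has rank below $2$ (which forces $\dim V_2=2$) and likewise that $\dim V_1=1$. That case analysis is the only genuine computation.
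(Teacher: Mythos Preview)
Your Clifford identification is not quite right. Take $s=0$, $r_{12}=1$ and all other $r_{ij}=0$; this $\lambda$ is $R$-null, yet a direct check gives $A(\lambda)^2 = -4\,{\mbox{\rm diag}}(1,1,0,0,0,0,1,1) \neq 0$. So $A(\lambda)^2$ is \emph{not} a scalar multiple of $R(\lambda)I$, and hence neither the anticommutation $A(\lambda)A(\mu)+A(\mu)A(\lambda)=cB_R(\lambda,\mu)I$ nor the identity ${\mbox{\rm Im}}\,A(\lambda)=\Ker A(\lambda)$ holds; both your equivariance justification and your intrinsic alternative rest on these false relations. What \emph{is} true --- and this is what the spinor remark is really pointing at --- is $A(\lambda)JA(\lambda) = R(\lambda)J$, equivalently $(JA(\lambda))^2 = R(\lambda)I$, where $J = \bigl(\begin{smallmatrix}0&I\\I&0\end{smallmatrix}\bigr)$: the genuine Clifford map on the spin module is $B(\lambda):=JA(\lambda)$, not $A(\lambda)$ itself. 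Since $J$ is invertible, $\Ker B(\lambda)=\Ker A(\lambda)$, so your $V_1,V_2,V_4$ are unchanged and both the equivariance argument and the intrinsic one go through verbatim once every $A$ is replaced by $B$. The gap is real but easily repaired.

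With that repair your route is genuinely different from the paper's. The paper does not reduce to a single model flag; instead it introduces an adapted frame $f_1,f_2,f_3$ depending on nine affine parameters $z_{ij}$ that chart the null-flag manifold ${\mathcal F}'$ near a base point, and solves the linear systems $A(f_i)\eta=0$ explicitly in those parameters, exhibiting bases of $V_4$, $V_2$, $V_1$ directly. Your argument is cleaner for the bare dimension statement; the paper's explicit computation buys the concrete frame $\eta_1,\dots,\eta_4$ of $V_1\subset V_2\subset V_4$ written in the flag coordinates $z_{ij}$, and those formulae are the essential input for the next section, where the generators $\zeta_1,\dots,\zeta_4$ of the prolonged distribution $E$ and all the bracket relations are computed directly in that chart.
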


\begin{proof}
The conformal orthogonal group ${\rm CO}(R)$ of the quadratic form $R$ acts transitively
on the null Grassmannian $\{(\Lambda_1, \Lambda_2, \Lambda_3)\}$ on the metric space
$D^\perp_m \cong \R^{4,3}$, $m \in \KK^{15}$ defined by $R$.
We take the basis of $D_m^\perp$:
\smash{$\varepsilon_1 = \frac{\pa}{\pa s}$}, \smash{$\varepsilon_2 = \frac{\pa}{\pa r_{12}}$},
\smash{$\varepsilon_3 = \frac{\pa}{\pa r_{13}}$},
\smash{$\varepsilon_4 = \frac{\pa}{\pa r_{14}}$},
\smash{$\varepsilon_5 = \frac{\pa}{\pa r_{23}}$},
\smash{$\varepsilon_6 = \frac{\pa}{\pa r_{24}}$},
\smash{$\varepsilon_7 = \frac{\pa}{\pa r_{34}}$}.
Then the representation matrix of the $(4, 3)$-metric on $R$ becomes
\[
\begin{pmatrix}
1 & \hphantom{-}0 & 0 & \hphantom{-}0 & \hphantom{-}0 & 0 & \hphantom{-}0
\\
0 & \hphantom{-}0 & 0 & \hphantom{-}0 & \hphantom{-}0 & 0 & -2
\\
0 & \hphantom{-}0 & 0 & \hphantom{-}0 & \hphantom{-}0 & 2 & \hphantom{-}0
\\
0 & \hphantom{-}0 & 0 & \hphantom{-}0 & -2 & 0 & \hphantom{-}0
\\
0 & \hphantom{-}0 & 0 & -2 & \hphantom{-}0 & 0 & \hphantom{-}0
\\
0 & \hphantom{-}0 & 2 & \hphantom{-}0 & \hphantom{-}0 & 0 & \hphantom{-}0
\\
0 & -2 & 0 & \hphantom{-}0 & \hphantom{-}0 & 0 & \hphantom{-}0
\end{pmatrix}
\]

Then we set
the base point $\bigl(\Lambda_1^0, \Lambda_2^0, \Lambda_3^0\bigr)$ of the null flag manifold
${\mathcal F}'$, where
\[
\Lambda_1^0 = \langle \varepsilon_2\rangle, \qquad
\Lambda_2^0 = \langle \varepsilon_2, \varepsilon_3\rangle, \qquad
\Lambda_3^0 = \langle \varepsilon_2, \varepsilon_3, \varepsilon_4\rangle,
\]

We take the frame
\begin{align*}
f_1 & = z_{11}\varepsilon_1 + \varepsilon_2 + z_{13} \varepsilon_3 + z_{14} \varepsilon_4 + z_{15} \varepsilon_5 +
z_{16} \varepsilon_6 + z_{17} \varepsilon_7,
\\
f_2 & = z_{21}\varepsilon_1 + \hphantom{\varepsilon_2 + z_{13}} \varepsilon_3 + z_{24} \varepsilon_4 + z_{25} \varepsilon_5 + z_{26} \varepsilon_6 + z_{27} \varepsilon_7,
\\
f_3 & = z_{31}\varepsilon_1 + \hphantom{\varepsilon_2 + z_{13} \varepsilon_3 + z_{14}} \varepsilon_4 + z_{35} \varepsilon_5 +
z_{36} \varepsilon_6 + z_{37} \varepsilon_7,
\end{align*}
associated to a (not necessarily null) flag $(\Lambda_1, \Lambda_2, \Lambda_3)$
with $\Lambda_1 = \langle f_1\rangle$, $\Lambda_2 = \langle f_1, f_2\rangle$ and $\Lambda_3 = \langle f_1, f_2, f_3\rangle$ in a neighbourhood of the base point $\bigl(\Lambda_1^0, \Lambda_2^0, \Lambda_3^0\bigr)$.

Then the condition that $(\Lambda_1, \Lambda_2, \Lambda_3)$ is a null flag is equivalent to that
\begin{gather*}
(f_1 \vert f_1) = z_{11} - 4z_{17} + 4z_{13}z_{16} - 4z_{14}z_{15} = 0,
\\
(f_1 \vert f_2) = z_{11}z_{21} - 2z_{27} + 2z_{13}z_{26} - 2z_{14}z_{25} - 2z_{15}z_{24} + 2z_{16} = 0,
\\
(f_1 \vert f_3) = z_{11}z_{31} - 2z_{37} + 2z_{13}z_{36} - 2z_{14}z_{35} - 2z_{15} = 0,
\\
(f_2 \vert f_2) = z_{21}^2 + 4z_{26} - 4z_{24}z_{25} = 0,
\\
(f_2 \vert f_3) = z_{21}z_{31} + 2z_{36} - 2z_{24}z_{35} - 2z_{25} = 0,
\\
(f_3 \vert f_3) = z_{31}^2 - 4z_{35} = 0.
\end{gather*}
Thus the null flag manifold ${\mathcal F}'$ has a system of local coordinates $(z_{11}, z_{13}, z_{14}, z_{15}, z_{16}, z_{21}, z_{24},\allowbreak z_{25}, z_{31})$ and $\dim {\mathcal F}' = 9$.
For $\Lambda_1 = \langle f_1\rangle$, the equation \eqref{star} is equivalent to that
\begin{alignat*}{3}
&2u_2 + 2z_{13}u_3 + 2z_{14}u_4 - z_{11}v_1 = 0,
\qquad&&
-2u_1 + 2z_{15}u_3 + 2z_{16}u_4 - z_{11}v_2 = 0,&
\\
&-2z_{13}u_1 - 2z_{15}u_2 + 2z_{17}u_4 - z_{11}v_3 = 0,
\qquad&&
-2z_{14}u_1 - 2z_{16}u_2 - 2z_{17}u_3 - z_{11}v_4 = 0,&
\\
&z_{11}u_1 + 2z_{17}v_2 - 2z_{16}v_3 + 2z_{15}v_4 = 0,
\qquad&&
z_{11}u_2 - 2z_{17}v_1 + 2z_{14}v_3 - 2z_{13}v_4 = 0,&
\\
&z_{11}u_3 + 2z_{16}v_1 - 2z_{14}v_2 + 2v_4 = 0,
\qquad&&
z_{11}u_4 - 2z_{15}v_1 + 2z_{13}v_2 - 2v_3 = 0,&
\end{alignat*}
and, in fact, to that
\begin{alignat*}{3}
&u_1 = z_{15}u_3 + z_{16}u_4 - \frac{1}{2}z_{11}v_2, \qquad&&
u_2 = -z_{13}u_3 - z_{14}u_4 + \frac{1}{2}z_{11}v_1,&
\\
&v_3 = \frac{1}{2}z_{11}u_4 - z_{15}v_2 + z_{13}v_2, \qquad&&
v_4 = - \frac{1}{2}z_{11}u_3 - z_{16}v_1 + z_{14}v_2.&
\end{alignat*}
Thus we see that the solutions form a null $4$-space $V_4$ in $D_m \cong \KK^8$
\[
V_4 = \left\langle
\begin{matrix}
{}^t\bigl(z_{15}, - z_{13}, 1, 0, 0, 0, 0, - \frac{1}{2}z_{11}\bigr), &
{}^t\bigl(z_{16}, - z_{14}, 0, 1, 0, 0, \frac{1}{2}z_{11}, 0\bigr)
\\
{}^t\bigl(0, \frac{1}{2}z_{11}, 0, 0, 1, 0, - z_{15}, - z_{16}\bigr), &
{}^t\bigl(-\frac{1}{2}z_{11}, 0, 0, 0, 0, 1, z_{13}, z_{14}\bigr)
\end{matrix}
\right\rangle
\]
via the frame $(X_1, X_2, X_3, X_4, Y_1, Y_2, Y_3, Y_4)$.
For $\Lambda_2 = \langle f_1, f_2\rangle$, we get the equation \eqref{star} for $f_1$ as above and, in addition,
the equation \eqref{star} applied to $f_2$,
\begin{alignat*}{3}
&2u_3 + 2z_{24}u_4 - z_{21}v_1 = 0,
\qquad&&
- 2z_{25}u_3 + 2z_{26}u_4 - z_{21}v_2 = 0,&
\\
&-2u_1 - 2z_{25}u_2 + 2z_{27}u_4 - z_{21}v_3 = 0,
\qquad&&
-2z_{24}u_1 - 2z_{26}u_2 - 2z_{27}u_3 - z_{21}v_4 = 0,&
\\
&z_{21}u_1 + 2z_{27}v_2 - 2z_{26}v_3 + 2z_{25}v_4 = 0,
\qquad&&
z_{21}u_2 - 2z_{27}v_1 + 2z_{24}v_3 - 2v_4 = 0,&
\\
&z_{21}u_3 + 2z_{26}v_1 - 2z_{24}v_2 = 0,
\qquad&&
z_{21}u_4 - 2z_{25}v_1 + 2v_2 = 0.&
\end{alignat*}
Then, by the two systems of linear equations for $f_1$ and $f_2$, we have
\begin{gather*}
u_1 = \biggl(-z_{15}z_{24} + z_{16} + \frac{1}{4}z_{11}z_{21}\biggr)u_4 + \biggl(\frac{1}{2}z_{15}z_{21} - \frac{1}{2}z_{11}z_{25}\biggr)v_1
\\
u_2 = (z_{13}z_{24} - z_{14})u_4 + \biggl(-\frac{1}{2}z_{13}z_{21} + \frac{1}{2}z_{11}\biggr)v_1,
\\
u_3 = - z_{24}u_4 + \frac{1}{2}z_{21}v_1,
\\
v_2 = - \frac{1}{2}z_{21}u_4 + \frac{1}{2}z_{21}v_1
\\
v_3 = \biggl(\frac{1}{2}z_{11} - \frac{1}{2}z_{13}z_{21}\biggr)u_4 + (- z_{15} + z_{13}z_{25})v_1,
\\
v_4 = \biggl(\frac{1}{2}z_{11}z_{24} - \frac{1}{2}z_{14}z_{21}\biggr)u_4 + \biggl(- \frac{1}{4}z_{11}z_{21} - z_{16} + z_{14}z_{25}\biggr)v_1.
\end{gather*}
Thus we see $\Lambda_2$ corresponds to the null $2$-plane $V_2$ in $D_m \cong \KK^8$, by \eqref{star},
generated by two vectors
\begin{gather*}
{}^t\bigl(- z_{15}z_{24} \hspace{-0.6pt}+\hspace{-0.6pt} z_{16}\hspace{-0.6pt} +\hspace{-0.6pt} \tfrac{1}{4}z_{11}z_{21}, z_{13}z_{24} \hspace{-0.6pt}-\hspace{-0.6pt} z_{14}, - z_{24}, 1, 0, - \tfrac{1}{2}z_{21},
\tfrac{1}{2}z_{11}\hspace{-0.6pt}-\hspace{-0.6pt} \tfrac{1}{2}z_{13}z_{21}, \tfrac{1}{2}z_{11}z_{24} \hspace{-0.6pt}-\hspace{-0.6pt} \tfrac{1}{2}z_{14}z_{21}\bigr),
\\
{}^t\bigl(\tfrac{1}{2}z_{15}z_{21} \hspace{-0.6pt}-\hspace{-0.6pt} \tfrac{1}{2}z_{11}z_{25}, - \tfrac{1}{2}z_{13}z_{21} \hspace{-0.6pt}+\hspace{-0.6pt} \tfrac{1}{2}z_{11},
\tfrac{1}{2}z_{21}, 0, 1, z_{25}, - z_{15} \hspace{-0.6pt}+\hspace{-0.6pt} z_{13}z_{25}, -\tfrac{1}{4}z_{11}z_{21} \hspace{-0.6pt}-\hspace{-0.6pt} z_{16} \hspace{-0.6pt}+\hspace{-0.6pt} z_{14}z_{25}\bigr).
\end{gather*}
For $\Lambda_3 = \langle f_1, f_2, f_3\rangle$, we obtain the additional condition \eqref{star} applied to $f_3$,
which is given by
\begin{alignat*}{3}
&2u_4 - z_{31}v_1 = 0,
\qquad&&
2z_{35}u_3 + 2z_{36}u_4 - z_{31}v_2 = 0,&
\\
&-2 z_{35}u_2 + 2z_{37}u_4 - z_{31}v_3 = 0,
\qquad&&
-2u_1 - 2z_{36}u_2 - 2z_{37}u_3 - z_{31}v_4 = 0,&
\\
&z_{31}u_1 + 2z_{37}v_2 - 2z_{36}v_3 + 2z_{35}v_4 = 0,
\qquad&&
z_{31}u_2 - 2z_{37}v_1 + 2v_3 = 0,&
\\
&z_{31}u_3 + 2z_{36}v_1 - 2v_2 = 0,
\qquad&&
z_{31}u_4 - 2z_{35}v_1 = 0.&
\end{alignat*}
Then, from the conditions \eqref{star} for $f_1$, $f_2$ and $f_3$, we have
\begin{gather*}
u_1 = \biggl(- \frac{1}{2}z_{11}z_{25} + \frac{1}{2}z_{16}z_{31} + \frac{1}{8}z_{11}z_{21}z_{31}
- \frac{1}{2}z_{15}z_{24}z_{31} + \frac{1}{2}z_{15}z_{21}\biggr)v_1,
\\
u_2 = \biggl(\frac{1}{2}z_{11} - \frac{1}{2}z_{13}z_{21} - \frac{1}{2}z_{14}z_{31} + \frac{1}{2}z_{13}z_{24}z_{31}\biggr)v_1,
\\
u_3 = \biggl(\frac{1}{2}z_{21} - \frac{1}{2}z_{24}z_{31}\biggr)v_1,
\\
u_4 = \frac{1}{2}z_{31}v_1,
\\
v_2 = \biggl(z_{25} -\frac{1}{4}z_{21}z_{31}\biggr)v_1,
\\
v_3 = \biggl(- z_{15} + \frac{1}{4}z_{11}z_{31} + z_{13}z_{25} - \frac{1}{4}z_{13}z_{21}z_{31}\biggr)v_1,
\\
v_4 = \biggl(- z_{16} - \frac{1}{4}z_{11}z_{21} + z_{14}z_{25} + \frac{1}{4}z_{11}z_{24}z_{31}
- \frac{1}{4}z_{14}z_{21}z_{31}\biggr)v_1.
\end{gather*}
Therefore, if we set, by taking $v_1 = 1$,
\[
\eta_1 =
\begin{pmatrix}
- \frac{1}{2}z_{11}z_{25} + \frac{1}{2}z_{16}z_{31} + \frac{1}{8}z_{11}z_{21}z_{31}
- \frac{1}{2}z_{15}z_{24}z_{31} + \frac{1}{2}z_{15}z_{21}
\\
\frac{1}{2}z_{11} - \frac{1}{2}z_{13}z_{21} - \frac{1}{2}z_{14}z_{31} + \frac{1}{2}z_{13}z_{24}z_{31}
\\
\frac{1}{2}z_{21} - \frac{1}{2}z_{24}z_{31}
\\
\frac{1}{2}z_{31}
\\
1
\\
z_{25} -\frac{1}{4}z_{21}z_{31}
\\
- z_{15} + \frac{1}{4}z_{11}z_{31} + z_{13}z_{25} - \frac{1}{4}z_{13}z_{21}z_{31}
\\
- z_{16} - \frac{1}{4}z_{11}z_{21} + z_{14}z_{25} + \frac{1}{4}z_{11}z_{24}z_{31} - \frac{1}{4}z_{14}z_{21}z_{31}
\end{pmatrix},
\]
then we see that $\Lambda_3$ corresponds to the null line $V_1$ generated by $\eta_1$.
Moreover, we set $\eta_2$ by
$
{}^t\bigl(z_{16} + \frac{1}{4}z_{11}z_{21} - z_{15}z_{24}, \,- z_{14} + z_{13}z_{24}, \, - z_{24}, \,1, \,0, \,- \frac{1}{2}z_{21}, \,
\frac{1}{2}z_{11} - \frac{1}{2}z_{13}z_{21}, \,\frac{1}{2}z_{11}z_{24} - \frac{1}{2}z_{14}z_{21}\bigr)
$,
$\eta_3$~by ${}^t(z_{15}, - z_{13}, 1, 0, 0, 0, 0, - \frac{1}{2}z_{11})
$, and $\eta_4$ by
$
{}^t\bigl(-\frac{1}{2}z_{11}, 0, 0, 0, 0, 1, z_{13}, z_{14}\bigr)
$.
Then we have that~$(\eta_1, \eta_2, \eta_3, \eta_4)$ is a frame of $V_4$ satisfying
$V_1 = \langle \eta_1\rangle \subset V_2 = \langle \eta_1, \eta_2\rangle \subset V_4 = \langle \eta_1, \eta_2, \eta_3, \eta_4\rangle$.
\end{proof}

\ber
{\rm
The total null flag bundle $\widetilde{\mathcal F}$ constructed from $D$ which consists of all null flags $V_1 \subset V_2 \subset V_4 \subset D_m \cong \R^{4,4}$, $m \in \KK^{15}$
is of dimension $15 + 11 = 26$.
The total null flag bundle $\widetilde{\mathcal F}'$ constructed from $D^\perp$ which consists of $\Lambda_1 \subset
\Lambda_2 \subset \Lambda_3 \subset D^\perp_m \cong \R^{4,3}$, $m \in \KK^{15}$, is of dimension $15+9 = 24$.
Then we have obtained, as above, the embedding \smash{$\widetilde{\mathcal F}' \to \widetilde{\mathcal F}$} of codimension $2$.
}
\enr

\section{Prolongation of Cartan's model}
\label{Prolongation-of-Cartan's-$(8, 15)$-distribution}

The theory of prolongations and equivalence problems of distributions are established by many authors (see, for instance,
\cite{BCGGG, Montgomery, Morimoto, Tanaka1}).
Here we provide, related to the notion of singular curves of distributions, a way of prolongations
from viewpoints of sub-Riemannian geometry and geometric control theory.

We set, as the prolonged space,
$W = \widetilde{\mathcal F}' \cong \KK^{15}\times {\mathcal F}'$
in $\widetilde{\mathcal F} \cong \KK^{15}\times {\mathcal F}$
by the null flag manifold~${\mathcal F}'$.
Note that $\dim({\mathcal F}') = 9$ and that $\dim(W) = 24$:
$W$ has a local coordinate system
\begin{gather*}
(z,\hspace{-0.2pt} x_1,\hspace{-0.2pt} x_2,\hspace{-0.2pt} x_3,\hspace{-0.2pt} x_4,\hspace{-0.2pt} y_1,\hspace{-0.2pt} y_2,\hspace{-0.2pt} y_3,\hspace{-0.2pt} y_4,\hspace{-0.2pt} x_{12},\hspace{-0.2pt} x_{13},\hspace{-0.2pt} x_{14},\hspace{-0.2pt} x_{23},\hspace{-0.2pt} x_{24},\hspace{-0.2pt} x_{34};\hspace{-0.2pt}
z_{11},\hspace{-0.2pt} z_{13},\hspace{-0.2pt} z_{14},\hspace{-0.2pt} z_{15},\hspace{-0.2pt} z_{16},\hspace{-0.2pt} z_{21},\hspace{-0.2pt}
 z_{24},\hspace{-0.2pt} z_{25},\hspace{-0.2pt} z_{31}).
\end{gather*}
We are going to define and study the canonical distribution $E$ on $W = \widetilde{\mathcal F}'$.

Take any point $w_0 = (m_0, (V_1)_0, (V_2)_0, (V_4)_0)$ of $W$. Then
we define $E_{w_0} \subset T_{w_0}W$ as the set of initial vectors
$
\bigl(m'(0), V_1'(0), V_2'(0), V_4'(0)\bigr)
$
of curves $(m(t), V_1(t), V_2(t), V_4(t)) \colon (\R, 0) \to W$ in~$W$ which satisfy the condition
$m'(t) \in V_1(t)$, $\eta_1'(t) \in V_2(t)$, $\eta_2'(t) \in V_4(t)$ for some (so equivalently for any) framing
$V_1(t) = \langle \eta_1(t)\rangle$, $V_2(t) = \langle \eta_1(t), \eta_2(t)\rangle$.

Now we calculate the canonical distribution $E$ explicitly.
The above condition for $E \subset TW$ reads, at $t = 0$, that
\begin{gather*}
m'(0) = p \eta_1(m(0)), \qquad \eta_1'(0) = q \eta_1(m(0)) + r \eta_2(m(0)),
\\
f_2'(0) = s \eta_1(m(0)) + u \eta_2(m(0)) + v \eta_3(m(0)) + w \eta_4(m(0)),
\end{gather*}
for some $p, q, r, s, u, v, w \in \R$.

By the above second condition
$\eta_1' = q\,\eta_1 + r\,\eta_2$ at $t = 0$, we see $q = 0$ and $\frac{1}{2}z_{31}' = r$ at $t = 0$.
Moreover, after some straightforward calculations, we have
\begin{gather*}
z_{11}' - z_{21}z_{13}' - z_{13}z_{21}' - z_{31}z_{14}' + z_{24}z_{31}z_{13}' + z_{13}z_{31}z_{24}' = 0,
\\
z_{21}' - z_{31}z_{24}' = 0, \qquad z_{25}' - \frac{1}{4}z_{31}^2z_{24}' = 0,
\\
z_{15}' - \frac{1}{4}z_{31}z_{11}' - z_{25}z_{13}' - z_{13}z_{25}' + \frac{1}{4}z_{21}z_{31}z_{13}' +
\frac{1}{4}z_{13}z_{31}z_{21}' = 0,
\\
z_{16}' + \frac{1}{4}z_{21}z_{11}' + \frac{1}{4}z_{11}z_{21}' - z_{25}z_{14}' - z_{14}z_{25}' - \frac{1}{4}z_{24}z_{31}z_{11}' -
\frac{1}{4}z_{11}z_{31}z_{24}' \\
\qquad{} + \frac{1}{4}z_{21}z_{31}z_{14}' + \frac{1}{4}z_{14}z_{31}z_{21}' = 0,
\end{gather*}
at $t = 0$, for the coordinate functions of the curve $\eta_1$ on ${\mathcal F}'$.
By the above third condition
$f_2' = s\,\eta_1 + u\,\eta_2 + v\,\eta_3 + w\,\eta_4$ at $t = 0$, we have that
$s = u = 0$ and that $-z_{24}' = v$, $- \frac{1}{2}z_{21}' = w$ at~$t = 0$.
Moreover, we have that
\[
z_{16}' - z_{24}z_{15}' + \frac{1}{4}z_{21}z_{11}' = 0, \qquad z_{14}' - z_{24}z_{13}' 0, z_{11}' - z_{21}z_{13}' = 0,
\]
at $t = 0$.
In term of differential $1$-forms, the above conditions are reduced to that
\begin{gather*}
{\rm d}z_{11} - z_{21}{\rm d}z_{13} = 0, \qquad {\rm d}z_{21} - z_{31}{\rm d}z_{24} = 0, \\ {\rm d}z_{14} - z_{24}{\rm d}z_{13} = 0, \qquad
{\rm d}z_{25} - \frac{1}{4}z_{31}^2{\rm d}z_{24} = 0,
\\
{\rm d}z_{15} - z_{25}{\rm d}z_{13} = 0, \qquad {\rm d}z_{16} - \biggl(z_{24}z_{25} - \frac{1}{4}z_{21}^2\biggr){\rm d}z_{13} = 0,
\end{gather*}
at $t = 0$.
To get the frame of $E$, we set
\[
\zeta = A\frac{\pa}{\pa z_{11}} + B\frac{\pa}{\pa z_{13}} + C\frac{\pa}{\pa z_{14}} + D\frac{\pa}{\pa z_{15}} +
F\frac{\pa}{\pa z_{16}} + G\frac{\pa}{\pa z_{21}} + H\frac{\pa}{\pa z_{24}} + I\frac{\pa}{\pa z_{25}} + J\frac{\pa}{\pa z_{31}}.
\]
The condition that $\zeta$ belongs to $E$ is given by
\begin{alignat*}{4}
&A - z_{21}B = 0, \qquad&& G - z_{31}H = 0, \qquad&& C - z_{25}B = 0,& \\
&I - \frac{1}{4}z_{31}^2H = 0, \qquad&& D - z_{25}B = 0, \qquad&&
F - \biggl(z_{24}z_{25} - \frac{1}{4}z_{21}^2\biggr)B = 0,&
\end{alignat*}
and thus we have, for some $B, H, J \in \R$,
\begin{align*}
\zeta = B\biggl\{&{} \frac{\pa}{\pa z_{13}} + z_{21}\frac{\pa}{\pa z_{11}} + z_{24}\frac{\pa}{\pa z_{14}} +
z_{25}\frac{\pa}{\pa z_{15}} + \biggl(z_{24}z_{25} - \frac{1}{4}z_{21}^2\biggr)\frac{\pa}{\pa z_{16}}\biggr\}
\vspace{0.3truecm}
\\
&{}
{+}\, H\biggl(\frac{\pa}{\pa z_{24}} + z_{31}\frac{\pa}{\pa z_{21}} + \frac{1}{4}z_{31}^2\frac{\pa}{\pa z_{25}}\biggr) +
J\frac{\pa}{\pa z_{31}},
\end{align*}
at $t = 0$.

Thus, adding the generator which comes from the condition $m'(0) = p\,\eta_1(m(0))$, we have the following lemma.

\begin{Lemma}
\label{Generators-of-E}
We have on the $24$-dimensional
space $W = \R^{15}\times {\mathcal F}'$ with local coordinates $z$, $x_i$, $y_j$, $x_{ij}$, $z_{k\ell}$,
the prolonged distribution $E$ with the system of generators
\begin{gather*}
\zeta_1 = \frac{\pa}{\pa z_{13}} + z_{21}\frac{\pa}{\pa z_{11}} + z_{24}\frac{\pa}{\pa z_{14}} +
z_{25}\frac{\pa}{\pa z_{15}} + \biggl(z_{24}z_{25} - \frac{1}{4}z_{21}^2\biggr)\frac{\pa}{\pa z_{16}},
\\
\zeta_2 = \frac{\pa}{\pa z_{24}} + z_{31}\frac{\pa}{\pa z_{21}} + \frac{1}{4}z_{31}^2\frac{\pa}{\pa z_{25}},
\\
\zeta_3 = \frac{\pa}{\pa z_{31}},
\\
\zeta_4 = \biggl(-\frac{1}{2}z_{11}z_{25} + \frac{1}{2}z_{15}z_{21} + \frac{1}{2}z_{16}z_{31}
+ \frac{1}{8}z_{11}z_{21}z_{31} - \frac{1}{2}z_{15}z_{24}z_{31}\biggr)X_1
\\
\hphantom{\zeta_4 = \biggl(}{}
 + \biggl(\frac{1}{2}z_{11} - \frac{1}{2}z_{13}z_{21} - \frac{1}{2}z_{14}z_{31} + \frac{1}{2}z_{13}z_{24}z_{31}\biggr)X_2
+ \biggl(\frac{1}{2}z_{21} - \frac{1}{2}z_{24}z_{31}\biggr)X_3 + \frac{1}{2}z_{31}X_4
\\
\hphantom{\zeta_4 = \biggl(}{}
 + \ Y_1 + \biggl(z_{25} - \frac{1}{4}z_{21}z_{31}\biggr)Y_2 +
\biggl(-z_{15} + \frac{1}{4}z_{11}z_{31} + z_{13}z_{25} - \frac{1}{4}z_{13}z_{21}z_{31}\biggr)Y_3
\\
\hphantom{\zeta_4 = \biggl(}{}
 + \biggl(-z_{16} - \frac{1}{4}z_{11}z_{21} + z_{14}z_{25} + \frac{1}{4}z_{11}z_{24}z_{31}
- \frac{1}{4}z_{14}z_{21}z_{31}\biggr)Y_4.
\end{gather*}
\end{Lemma}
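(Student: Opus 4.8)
The plan is to unwind the definition of $E$ directly, using the explicit parametrisation of the null-flag manifold $\mathcal{F}'$ by the coordinates $(z_{11},z_{13},z_{14},z_{15},z_{16},z_{21},z_{24},z_{25},z_{31})$ together with the explicit frame $(\eta_1,\eta_2,\eta_3,\eta_4)$ of $V_4$ produced in the preceding lemma. First I would observe that the incidence conditions $m'(t)\in V_1(t)$, $\eta_1'(t)\in V_2(t)$, $\eta_2'(t)\in V_4(t)$ defining $E_{w_0}$ are independent of the chosen framing: rescaling $\eta_i$ by a nowhere-vanishing function changes $\eta_i'$ only by a multiple of $\eta_i$, which lies in $V_1\subset V_2\subset V_4$, so ``for some'' and ``for any'' framing agree and $E$ is a well-defined subbundle of $TW$.

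Next I would evaluate the three conditions at $t=0$. Writing $m'(0)=p\,\eta_1$, $\eta_1'(0)=q\,\eta_1+r\,\eta_2$, $\eta_2'(0)=s\,\eta_1+u\,\eta_2+v\,\eta_3+w\,\eta_4$ and matching components against the frame $(X_1,\dots,X_4,Y_1,\dots,Y_4)$ of $D_m$, one reads off $q=0$, $s=u=0$ together with the values of $p,r,v,w$ in terms of the derivatives of the coordinate functions of $\mathcal{F}'$. Eliminating these parameters leaves six linear relations among the $z_{k\ell}'(0)$ which, rephrased as a Pfaffian system, is
\begin{gather*}
{\rm d}z_{11}-z_{21}{\rm d}z_{13}=0,\qquad {\rm d}z_{21}-z_{31}{\rm d}z_{24}=0,\qquad {\rm d}z_{14}-z_{24}{\rm d}z_{13}=0,\\
{\rm d}z_{25}-\frac{1}{4}z_{31}^2{\rm d}z_{24}=0,\qquad {\rm d}z_{15}-z_{25}{\rm d}z_{13}=0,\qquad {\rm d}z_{16}-\Bigl(z_{24}z_{25}-\frac{1}{4}z_{21}^2\Bigr){\rm d}z_{13}=0.
\end{gather*}
Solving this system for a general tangent vector $\zeta=A\,\partial_{z_{11}}+B\,\partial_{z_{13}}+\dots+J\,\partial_{z_{31}}$ to the $\mathcal{F}'$-factor forces $A=z_{21}B$, $C=D=z_{25}B$, $F=\bigl(z_{24}z_{25}-\frac{1}{4}z_{21}^2\bigr)B$, $G=z_{31}H$, $I=\frac{1}{4}z_{31}^2H$, leaving the three free parameters $B,H,J$; setting $(B,H,J)$ equal to $(1,0,0)$, $(0,1,0)$, $(0,0,1)$ gives $\zeta_1,\zeta_2,\zeta_3$. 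The last generator comes from the condition $m'(0)=p\,\eta_1$ with $p=1$: it has trivial $\mathcal{F}'$-component and equals $\eta_1$ written in the frame $(X_1,\dots,Y_4)$, which is exactly the displayed $\zeta_4$. Finally I would check that $\zeta_1,\zeta_2,\zeta_3,\zeta_4$ are everywhere linearly independent, which is immediate since $\zeta_4$ has nonzero $Y_1$-component while $\zeta_1,\zeta_2,\zeta_3$ lie in the $\mathcal{F}'$-directions and are visibly independent there; hence $E$ is of rank $4$ with the asserted frame.

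The substance of the argument --- and the only real obstacle --- is the bookkeeping hidden in ``after some straightforward calculations'': one must differentiate the polynomial expressions for the coordinates of $\eta_1$ and $\eta_2$ along an arbitrary curve in $W$ and reduce the resulting vectors in $D_m\cong\KK^8$ modulo $\langle\eta_1\rangle$, $\langle\eta_1,\eta_2\rangle$ and $\langle\eta_1,\dots,\eta_4\rangle$ respectively. This is routine linear algebra but error-prone; organising it by fixing the framing so that the $v_1$-component of $\eta_1$ is normalised to $1$ (as in the construction in the previous lemma) keeps the intermediate expressions small and makes the elimination of $p,q,r,s,u,v,w$ transparent.
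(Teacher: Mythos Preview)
Your proposal is correct and follows essentially the same route as the paper: the authors likewise write $m'(0)=p\,\eta_1$, $\eta_1'(0)=q\,\eta_1+r\,\eta_2$, $\eta_2'(0)=s\,\eta_1+u\,\eta_2+v\,\eta_3+w\,\eta_4$, read off $q=s=u=0$ and the values of $r,v,w$, reduce the remaining relations to the identical six-form Pfaffian system, and extract $\zeta_1,\zeta_2,\zeta_3$ from the free parameters $B,H,J$ and $\zeta_4$ from $p\,\eta_1$. One small slip: you wrote $C=D=z_{25}B$, but the Pfaffian equation ${\rm d}z_{14}-z_{24}\,{\rm d}z_{13}=0$ gives $C=z_{24}B$ (consistent with the $z_{24}\,\partial_{z_{14}}$ term in $\zeta_1$), while $D=z_{25}B$.
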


Note that the vector field $\zeta_4$ in Lemma~\ref{Generators-of-E} is induced from $\eta_1$
obtained in the previous section. We have chosen the above system of generators regarding
the $F_4$-Dynkin diagram (see Remark~\ref{correspondence-generators-roots}).
Now we have the following.

\begin{Lemma}
\label{F_4-bracket-relations}
The growth vector of the distribution $E$ defined in the previous Lemma~$\ref{Generators-of-E}$
is given by $(4, 7, 10, 13, 16, 18, 20, 21, 22, 23, 24)$ and the following bracket relations for the generators
$\zeta_1$, $\zeta_2$, $\zeta_3$, $\zeta_4$ of $E$ given in Lemma~$\ref{Generators-of-E}$:
\begin{alignat*}{8}
&[\zeta_1, \zeta_2] = \zeta_5,\qquad&& [\zeta_1, \zeta_3] = 0,\qquad&& [\zeta_1, \zeta_4] = 0,\qquad&& [\zeta_2, \zeta_3] = \zeta_6,&\\
& [\zeta_2, \zeta_4] = 0,\qquad&& [\zeta_3, \zeta_4] = \zeta_7 \quad \makebox[0pt][l]{\text{in $E^{(2)}$;}}&
\\
&[\zeta_1, \zeta_5] = 0,\qquad&& [\zeta_1, \zeta_6] = \zeta_8,\qquad&& [\zeta_1, \zeta_7] = 0,\qquad&& [\zeta_2, \zeta_5] = 0,&\\
&
[\zeta_2, \zeta_6] = 0,\qquad&&[\zeta_2, \zeta_7] = \zeta_9,\qquad&&[\zeta_3, \zeta_5] = - \zeta_8,\qquad&& [\zeta_3, \zeta_6] = \zeta_{10},&\\
&[\zeta_3, \zeta_7] = 0,\qquad&& [\zeta_4, \zeta_5] = 0,\qquad&& [\zeta_4, \zeta_6] = - \zeta_9,\qquad&& [\zeta_4, \zeta_7] = 0 \quad \makebox[0pt][l]{\text{in $E^{(3)}$;}}&\\
&[\zeta_1, \zeta_8] = 0,\qquad&& [\zeta_1, \zeta_9] = \zeta_{11},\qquad&& [\zeta_1, \zeta_{10}] = \zeta_{12},\qquad&&
[\zeta_2, \zeta_8] = 0,&\\
& [\zeta_2, \zeta_9] = 0,\qquad&& [\zeta_2, \zeta_{10}] = 0,\qquad&& [\zeta_3, \zeta_8] = \zeta_{12},\qquad&&[\zeta_3, \zeta_9] = \zeta_{13},&
\\
&[\zeta_3, \zeta_{10}] = 0,\qquad&& [\zeta_4, \zeta_8] = - \zeta_{11},\qquad&& [\zeta_4, \zeta_9] = 0,\qquad&& [\zeta_4, \zeta_{10}] = - 2\zeta_{13} \quad \makebox[0pt][l]{\text{in $E^{(4)}$;}}&
\\
&[\zeta_1, \zeta_{11}] = 0,\qquad&& [\zeta_1, \zeta_{12}] = 0,\qquad&& [\zeta_1, \zeta_{13}] = \zeta_{14},\qquad&&
[\zeta_2, \zeta_{11}] = 0,& \\
&[\zeta_2, \zeta_{12}] = \zeta_{15},\qquad&& [\zeta_2, \zeta_{13}] = 0,\qquad&& [\zeta_3, \zeta_{11}] = \zeta_{14},\qquad&&[\zeta_3, \zeta_{12}] = 0,
\\
&[\zeta_3, \zeta_{13}] = 0,\qquad&& [\zeta_4, \zeta_{11}] = 0,\qquad&&
[\zeta_4, \zeta_{12}] = -2 \zeta_{14},\qquad&& [\zeta_4, \zeta_{13}] = \zeta_{16} \quad \makebox[0pt][l]{\text{in $E^{(5)}$;}}&
\\
&[\zeta_1, \zeta_{14}] = 0,\qquad&& [\zeta_1, \zeta_{15}] = 0,\qquad&& [\zeta_1, \zeta_{16}] = 0,\qquad&& [\zeta_2, \zeta_{14}] = \zeta_{17},&\\
& [\zeta_2, \zeta_{15}] = 0,\qquad&& [\zeta_2, \zeta_{16}] = 0,\qquad&& [\zeta_3, \zeta_{14}] = 0,\qquad&&[\zeta_3, \zeta_{15}] = 0,&
 \\
&[\zeta_3, \zeta_{16}] = 0,\qquad&& [\zeta_4, \zeta_{14}] = \zeta_{18},\qquad&& [\zeta_4, \zeta_{15}] = -2\zeta_{17},\qquad&&
[\zeta_4, \zeta_{16}] = 0 \quad \makebox[0pt][l]{\text{in $E^{(6)}$;}}&
\\
&[\zeta_1, \zeta_{17}] = 0,\qquad&& [\zeta_1, \zeta_{18}] = 0,\qquad&& [\zeta_2, \zeta_{17}] = 0,\qquad&& [\zeta_2, \zeta_{18}] = \zeta_{19},&\\
&[\zeta_3, \zeta_{17}] = \zeta_{20},\qquad&& [\zeta_3, \zeta_{18}] = 0,\qquad&& [\zeta_4, \zeta_{17}] = \zeta_{19},\qquad&&
 [\zeta_4, \zeta_{18}] = 0 \quad \makebox[0pt][l]{\text{in $E^{(7)}$;}}&
 \\
&[\zeta_1, \zeta_{19}] = 0,\qquad&& [\zeta_1, \zeta_{20}] = 0,\qquad&& [\zeta_2, \zeta_{19}] = 0,\qquad&& [\zeta_2, \zeta_{20}] = 0,&\\
&[\zeta_3, \zeta_{19}] = \zeta_{21},\qquad&& [\zeta_3, \zeta_{20}] = 0,\qquad&&[\zeta_4, \zeta_{19}] = 0,\qquad&&
 [\zeta_4, \zeta_{20}] = \frac{1}{2}\zeta_{21} \quad \makebox[0pt][l]{\text{in $E^{(8)}$;}}&
 \\
&[\zeta_1, \zeta_{19}] = 0,\qquad&& [\zeta_1, \zeta_{20}] = 0,\qquad&& [\zeta_2, \zeta_{19}] = 0,\qquad&& [\zeta_2, \zeta_{20}] = 0,&
 \\
& [\zeta_3, \zeta_{19}] = \zeta_{21},\qquad&& [\zeta_3, \zeta_{20}] = 0,\qquad&&[\zeta_4, \zeta_{19}] = 0,\qquad&&
 [\zeta_4, \zeta_{20}] = \frac{1}{2}\zeta_{21} \quad \makebox[0pt][l]{\text{in $E^{(8)}$;}}&
 \\
&[\zeta_1, \zeta_{21}] = 0,\qquad&& [\zeta_2, \zeta_{21}] = 0,\qquad&& [\zeta_3, \zeta_{21}] = \zeta_{22},\qquad&& [\zeta_4, \zeta_{21}] = 0
 \quad \makebox[0pt][l]{\text{in $E^{(9)}$;}}&
 \\
&[\zeta_1, \zeta_{22}] = 0,\qquad&& [\zeta_2, \zeta_{22}] = \zeta_{23},\qquad&& [\zeta_3, \zeta_{22}] = 0,\qquad&& [\zeta_4, \zeta_{22}] = 0
\quad \makebox[0pt][l]{\text{in $E^{(10)}$;}}&
 \\
&[\zeta_1, \zeta_{23}] = \zeta_{24},\qquad&& [\zeta_2, \zeta_{23}] = 0,\qquad&& [\zeta_3, \zeta_{23}] = 0,\qquad&& [\zeta_4, \zeta_{23}] = 0
 \quad \makebox[0pt][l]{\text{in $E^{(11)} = TW$,}}&
\end{alignat*}
which are calculated explicitly in the proof.
In particular, the distribution $E$ is isomorphic to the $(8, 15)$-distribution on
the quotient space by the parabolic subgroup associated to the root $\alpha_4$ of~$F_4$ in the complex case $($resp.
of $F_{4(4)}$ in the real case$)$.
\end{Lemma}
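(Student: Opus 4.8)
The plan is to work straight from the explicit generators $\zeta_1,\zeta_2,\zeta_3,\zeta_4$ produced in Lemma~\ref{Generators-of-E} and to build the derived flag of $E$ one layer at a time, at each stage computing the Lie brackets of the vector fields obtained so far against the four generators, introducing a fresh vector field $\zeta_k$ for each new bracket direction that arises, and reading off from these computations both the rank of $E^{(j)}$ and the structure constants. First I would compute the six brackets $[\zeta_i,\zeta_j]$, $1\le i<j\le 4$; exactly three are non-zero, so one sets $\zeta_5=[\zeta_1,\zeta_2]$, $\zeta_6=[\zeta_2,\zeta_3]$, $\zeta_7=[\zeta_3,\zeta_4]$ and checks that $\zeta_1,\dots,\zeta_7$ are pointwise independent, giving $\rank E^{(2)}=7$. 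Iterating the same step with $\zeta_1,\dots,\zeta_4$ against $\zeta_5,\zeta_6,\zeta_7$ produces $\zeta_8,\zeta_9,\zeta_{10}$ and $\rank E^{(3)}=10$, and one continues through eleven layers, producing $\zeta_{11},\dots,\zeta_{24}$, until $E^{(11)}=TW$; the numbers of new vector fields appearing at successive layers, namely $3,3,3,3,3,2,2,1,1,1,1$, are exactly the jumps of the asserted small growth vector $(4,7,10,13,16,18,20,21,22,23,24)$.

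Two bookkeeping points make this run. First, although each $\zeta_k$ carries coefficients polynomial in the fibre coordinates $z_{\ast\ast}$ — and for $\zeta_4$ and its descendants also involves the frame $X_i,Y_j$ on $\KK^{15}$ — the whole collection $\zeta_1,\dots,\zeta_{24}$ closes under bracket with constant structure constants; this constancy is precisely what the displayed bracket table asserts, and it suffices to verify the brackets of $\zeta_1,\zeta_2,\zeta_3,\zeta_4$ with every $\zeta_k$, since the remaining brackets $[\zeta_k,\zeta_\ell]$ are then forced by the Jacobi identity and automatically land in the span. Second, at each layer one must confirm that the newly introduced vector fields are independent modulo the previous $E^{(j-1)}$ at every point; because the structure constants are constant this is a single linear-algebra check rather than a pointwise one, but it is the step that actually pins down the growth vector and must be carried out honestly.

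With the full bracket table in hand, the last assertion follows by matching with $F_4$: one identifies $\zeta_1,\dots,\zeta_{24}$ with the twenty-four positive roots of $F_4$ so that $\zeta_1,\zeta_2,\zeta_3,\zeta_4$ correspond to the simple roots $\alpha_1,\alpha_2,\alpha_3,\alpha_4$ and each $\zeta_k$ sits in the height layer matching its position in the derived flag (this is the dictionary recorded in Remark~\ref{correspondence-generators-roots}); under this identification the computed structure constants agree, up to the chosen sign normalisation, with the Chevalley structure constants of $F_4$, so the symbol (Tanaka) algebra of $E$ is isomorphic to the negative part of $F_4$ graded by root height, i.e.\ by the filtration defined by the whole set of roots. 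Since the structure constants in the frame $\zeta_1,\dots,\zeta_{24}$ are genuinely constant, $(W,E)$ is locally the left-invariant distribution generated by the degree $-1$ part on the nilpotent group with this Lie algebra, which is the flat model of type $F_4$ (resp.\ $F_{4(4)}$ over $\R$), yielding the concluding statement of the Lemma. The hard part will be purely the bulk and care of the bracket computation: eleven layers, each needing the explicit brackets of the four polynomial/frame-valued generators against all earlier vector fields plus the independence checks, where it is easy to slip in the signs and the rational coefficients ($2$, $-2$, $\tfrac12$, the $-2\zeta_{13}$, etc.) — so the calculation must be organised so that the constancy of the structure constants stays manifest throughout.
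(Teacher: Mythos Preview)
Your plan is exactly the paper's own proof: a layer-by-layer explicit bracket computation starting from the four generators of Lemma~\ref{Generators-of-E}, introducing $\zeta_5,\dots,\zeta_{24}$ as the new independent directions appear, and reading off the growth vector and structure constants. One small slip: your list of jumps ``$3,3,3,3,3,2,2,1,1,1,1$'' has eleven entries summing to $23$, whereas from rank $4$ to rank $24$ there are ten jumps $3,3,3,3,2,2,1,1,1,1$; otherwise the outline matches the paper's computation precisely.
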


\begin{Remark}\label{correspondence-generators-roots}
Between the simple roots $\alpha_1$, $\alpha_2$, $\alpha_3$, $\alpha_4$ of $F_4$ (see, for instance, \cite{Bourbaki}) and
the generators $\zeta_1$, $\zeta_2$, $\zeta_3$, $\zeta_4$ of $E$, there exists the correspondence
$\zeta_i \longleftrightarrow -\alpha_i$, $i = 1, 2, 3, 4$,
\begin{alignat*}{3}
&\zeta_5 \longleftrightarrow - (\alpha_1 + \alpha_2), \qquad&&
\zeta_6 \longleftrightarrow -(\alpha_2 + \alpha_3), & \\
&\zeta_7 \longleftrightarrow -(\alpha_3 + \alpha_4), \qquad&&
\zeta_8 \longleftrightarrow -(\alpha_1 + \alpha_2 + \alpha_3), \\
&\zeta_9 \longleftrightarrow -(\alpha_2 + \alpha_3 + \alpha_4), \qquad&&
\zeta_{10} \longleftrightarrow -(\alpha_2 + 2\alpha_3), \\
&\zeta_{11} \longleftrightarrow -(\alpha_1 + \alpha_2 + \alpha_3 + \alpha_4), \qquad&&
\zeta_{12} \longleftrightarrow -(\alpha_1 + \alpha_2 + 2\alpha_3), \\
&\zeta_{13} \longleftrightarrow -(\alpha_2 + 2\alpha_3 + \alpha_4), \qquad&&
\zeta_{14} \longleftrightarrow -(\alpha_1 + \alpha_2 + 2\alpha_3 + \alpha_4), \\
&\zeta_{15} \longleftrightarrow -(\alpha_1 + 2\alpha_2 + 2\alpha_3), \qquad&&
\zeta_{16} \longleftrightarrow -(\alpha_2 + 2\alpha_3 + 2\alpha_4), \\
&\zeta_{17} \longleftrightarrow -(\alpha_1 + \alpha_2 + 2\alpha_3 + \alpha_4), \qquad&&
\zeta_{18} \longleftrightarrow -(\alpha_1 + \alpha_2 + 2\alpha_3 + 2\alpha_4), \\
&\zeta_{19} \longleftrightarrow -(\alpha_1 + 2\alpha_2 + 2\alpha_3 + 2\alpha_4), \qquad&&
\zeta_{20} \longleftrightarrow -(\alpha_1 + 2\alpha_2 + 3\alpha_3 + \alpha_4), \\
&\zeta_{21} \longleftrightarrow -(\alpha_1 + 2\alpha_2 + 3\alpha_3 + 2\alpha_4), \qquad&&
\zeta_{22} \longleftrightarrow -(\alpha_1 + 2\alpha_2 + 4\alpha_3 + 2\alpha_4),\\
&\zeta_{23} \longleftrightarrow -(\alpha_1 + 3\alpha_2 + 4\alpha_3 + 2\alpha_4), \qquad&&
\zeta_{24} \longleftrightarrow -(2\alpha_1 + 3\alpha_2 + 4\alpha_3 + 2\alpha_4).
\end{alignat*}
\end{Remark}

\begin{proof}[Proof of Lemma~\ref{F_4-bracket-relations}]
In fact, we have for the vector fields $\zeta_1$, $\zeta_2$, $\zeta_3$, $\zeta_4$ in Lemma~\ref{Generators-of-E}:
\begin{gather*}
[\zeta_1, \zeta_2] = - \frac{\pa}{\pa z_{14}} - z_{31}\frac{\pa}{\pa z_{11}} - \frac{1}{4}z_{31}^2\frac{\pa}{\pa z_{15}}
+ \biggl(
- z_{25} + \frac{1}{2}z_{21}z_{31} - \frac{1}{4}z_{24}z_{31}^2\biggr)\frac{\pa}{\pa z_{16}} =: \zeta_5,
\\
[\zeta_1, \zeta_3] = 0,\qquad [\zeta_1, \zeta_4] = 0,
\\
[\zeta_2, \zeta_3] = - \frac{\pa}{\pa z_{21}} - \frac{1}{2}z_{31}\frac{\pa}{\pa z_{25}} =: \zeta_6,
\qquad
[\zeta_2, \zeta_4] = 0,
\\
[\zeta_3, \zeta_4] = \biggl(\frac{1}{2}z_{16} + \frac{1}{8}z_{11}z_{21} - \frac{1}{2}z_{15}z_{24}\biggr)X_1 +
\biggl(- \frac{1}{2}z_{14} + \frac{1}{2}z_{13}z_{24}\biggr)X_2 - \frac{1}{2}z_{24}X_3 + \frac{1}{2}X_4
\\
\hphantom{[\zeta_3, \zeta_4] =}{}
 - \frac{1}{4}z_{21}Y_2
+ \biggl(\frac{1}{4}z_{11} - \frac{1}{4}z_{13}z_{21}\biggr)Y_3 + \biggl(\frac{1}{4}z_{11}z_{24} - \frac{1}{4}z_{14}z_{21}\biggr)Y_4 =: \zeta_7.
\end{gather*}
So far, we have \smash{$\rank E^{(2)} = 7$}.

Moreover, we have
\begin{gather*}
[\zeta_1, \zeta_5] = 0,
\qquad
[\zeta_1, \zeta_6] = \frac{\pa}{\pa z_{11}} + \frac{1}{2}z_{31}\frac{\pa}{\pa z_{15}}
+ \biggl(-\frac{1}{2}z_{21} + \frac{1}{2}z_{24}z_{31}\biggr)\frac{\pa}{\pa z_{16}} =: \zeta_8,
\\
[\zeta_1, \zeta_7] = 0, \\
[\zeta_2, \zeta_5] = 0, \qquad [\zeta_2, \zeta_6] = 0,
\\
[\zeta_2, \zeta_7] = \biggl(-\frac{1}{2}z_{15} + \frac{1}{8}z_{11}z_{31}\biggr)X_1 +
\frac{1}{2}z_{13}X_2 - \frac{1}{2}X_3
- \frac{1}{4}z_{31}Y_2 - \frac{1}{4}z_{13}z_{31}Y_3
\\
\hphantom{[\zeta_2, \zeta_7] = }{}
+ \biggl(\frac{1}{4}z_{11} - \frac{1}{4}z_{14}z_{31}\biggr)Y_4 =: \zeta_9
\\
[\zeta_3, \zeta_5] = - \frac{\pa}{\pa z_{11}} - \frac{1}{2}z_{31}\frac{\pa}{\pa z_{15}}
+ \biggl(\frac{1}{2}z_{21} - \frac{1}{2}z_{24}z_{31}\biggr)\frac{\pa}{\pa z_{16}} = - [\zeta_1, \zeta_6] = - \zeta_8,
\\
[\zeta_3, \zeta_6] = -\frac{1}{2}\frac{\pa}{\pa z_{25}} =: \zeta_{10}, \qquad [\zeta_3, \zeta_7] = 0,
\\
[\zeta_4, \zeta_5] = 0,
\qquad
[\zeta_4, \zeta_6] = - [\zeta_2, \zeta_7] = - \zeta_9, \qquad [\zeta_4, \zeta_7] = 0.
\end{gather*}
Then we have \smash{$\rank E^{(3)} = 10$}.

Further we have
\begin{gather*}
[\zeta_1, \zeta_8] = 0,\\
[\zeta_1, \zeta_9] = \biggl(-\frac{1}{2}z_{25} + \frac{1}{8}z_{21}z_{31}\biggr)X_1 + \frac{1}{2}X_2 - \frac{1}{4}z_{31}Y_3 + \biggl(\frac{1}{4}z_{21} - \frac{1}{4}z_{24}z_{31}\biggr)Y_4 =: \zeta_{11},
\\
[\zeta_1, \zeta_{10}] = \frac{1}{2}\frac{\pa}{\pa z_{15}} + \frac{1}{2}z_{24}\frac{\pa}{\pa z_{16}} =: \zeta_{12},
\\
[\zeta_2, \zeta_8] = 0, \qquad [\zeta_2, \zeta_9] = 0, \qquad [\zeta_2, \zeta_{10}] = 0,
\\
[\zeta_3, \zeta_8] = [\zeta_1, \zeta_{10}] = \zeta_{12}, \qquad
[\zeta_3, \zeta_9] = \frac{1}{8}z_{11}X_1 - \frac{1}{4}Y_2 - \frac{1}{4}z_{13}Y_3 - \frac{1}{4}z_{14}Y_4 =: \zeta_{13}, \\ [\zeta_3, \zeta_{10}] = 0,
\\
[\zeta_4, \zeta_8] = - [\zeta_1, \zeta_9] = - \zeta_{11}, \qquad [\zeta_4, \zeta_9] =0, \qquad
[\zeta_4, \zeta_{10}] = -2[\zeta_3, \zeta_9] = -2\zeta_{13}.
\end{gather*}
We get that \smash{$\rank E^{(4)} = 13$}.

Further we have
\begin{gather*}
[\zeta_1, \zeta_{11}] = 0, \qquad [\zeta_1, \zeta_{12}] = 0, \qquad
[\zeta_1, \zeta_{13}] = \frac{1}{8}z_{21}X_1 - \frac{1}{4}Y_3 - \frac{1}{4}z_{24}Y_4 =: \zeta_{14},\\
[\zeta_2, \zeta_{11}] = 0,\qquad
[\zeta_2, \zeta_{12}] = \frac{1}{2}\frac{\pa}{\pa z_{16}} =: \zeta_{15}, \qquad [\zeta_2, \zeta_{13}] = 0, \\
[\zeta_3, \zeta_{11}] = [\zeta_1, \zeta_{13}] = \zeta_{14}, \qquad [\zeta_3, \zeta_{12}] = 0,\qquad
[\zeta_3, \zeta_{13}] = 0, \\
[\zeta_4, \zeta_{11}] = 0, \qquad
[\zeta_4, \zeta_{12}] = - \frac{1}{4}z_{21}X_1 + \frac{1}{2}Y_3 + \frac{1}{2}z_{24}Y_4 = -2[\zeta_1, \zeta_{13}] =
-2\zeta_{14},\\
[\zeta_4, \zeta_{13}] = \biggl(-\frac{1}{8}z_{11}^2 - \frac{1}{2}z_{13}z_{16} + \frac{1}{2}z_{14}z_{15}\biggr)X_{12}
+ \frac{1}{2}z_{16}X_{13} - \frac{1}{2}z_{15}X_{14} - \frac{1}{2}z_{14}X_{23}\\
\hphantom{[\zeta_4, \zeta_{13}] =}{}
 + \frac{1}{2}z_{13}X_{24} - \frac{1}{2}X_{34} + \frac{1}{4}z_{11}Z =: \zeta_{16}.
\end{gather*}
Therefore, we have \smash{$\rank E^{(5)} = 16$}.

Furthermore,
\begin{gather*}
[\zeta_1, \zeta_{14}] = 0, \qquad [\zeta_1, \zeta_{15}] = 0, \qquad [\zeta_1, \zeta_{16}] = 0, \\
[\zeta_2, \zeta_{14}] = \frac{1}{8}z_{31}X_1 - \frac{1}{4}Y_4 =: \zeta_{17}, \qquad
[\zeta_2, \zeta_{15}] = 0, \qquad [\zeta_2, \zeta_{16}] = 0, \\
[\zeta_3, \zeta_{14}] = 0, \qquad [\zeta_3, \zeta_{15}] = 0, \qquad [\zeta_3, \zeta_{16}] = 0,\\
[\zeta_4, \zeta_{14}] = \biggl(-\frac{1}{2}z_{16} - \frac{1}{4}z_{11}z_{21} + \frac{1}{2}z_{14}z_{25}
+ \frac{1}{2}z_{15}z_{24} + \frac{1}{8}z_{13}z_{21}^2 - \frac{1}{2}z_{13}z_{24}z_{25}\biggr)X_2\\
\hphantom{[\zeta_4, \zeta_{14}] =}{}
+\biggl(-\frac{1}{8}z_{21}^2 + \frac{1}{2}z_{24}z_{25}\biggr)X_{13} - \frac{1}{2}z_{25}X_{14} - \frac{1}{2}z_{24}X_{23} + \frac{1}{2}X_{24} + \frac{1}{4}z_{12}Z =: \zeta_{18},\\
[\zeta_4, \zeta_{15}] = -2[\zeta_2, \zeta_{14}] = -2\zeta_{17}, \qquad
[\zeta_4, \zeta_{16}] = 0.
\end{gather*}
Thus we see \smash{$\rank E^{(6)} = 18$}.

Furthermore, we have
\begin{gather*}
[\zeta_1, \zeta_{17}] = 0, \qquad [\zeta_1, \zeta_{18}] = 0, \qquad [\zeta_2, \zeta_{17}] = 0, \\
[\zeta_2, \zeta_{18}] = \biggl(\frac{1}{2}z_{15} - \frac{1}{4}z_{11}z_{31} - \frac{1}{2}z_{13}z_{25} + \frac{1}{8}z_{14}z_{31}^2
+ \frac{1}{4}z_{13}z_{21}z_{31} - \frac{1}{8}z_{13}z_{24}z_{31}^2\biggr)X_{12}
\\
\hphantom{[\zeta_2, \zeta_{18}] =}{}
+ \biggl(\frac{1}{2}z_{25} - \frac{1}{4}z_{21}z_{31} + \frac{1}{8}z_{24}z_{31}^2\biggr)X_{13} - \frac{1}{8}z_{31}^2X_{14} - \frac{1}{2}X_{23} + \frac{1}{4}z_{31}Z =: \zeta_{19},
\\
[\zeta_3, \zeta_{17}] = \frac{1}{8}X_1 =: \zeta_{20}, \qquad [\zeta_3, \zeta_{18}] = 0, \\
[\zeta_4, \zeta_{17}] = [\zeta_2, \zeta_{18}] = \zeta_{19}, \qquad [\zeta_4, \zeta_{18}] = 0.
\end{gather*}
Thus we have
\smash{$\rank E^{(7)} = 20$}.

Moreover,
\begin{gather*}
[\zeta_1, \zeta_{19}] = 0, \qquad [\zeta_1, \zeta_{20}] = 0, \\
[\zeta_2, \zeta_{19}] = 0, \qquad [\zeta_2, \zeta_{20}] = 0, \\
[\zeta_3, \zeta_{19}] = \biggl(-\frac{1}{4}z_{11} + \frac{1}{4}z_{14}z_{31} + \frac{1}{4}z_{13}z_{21} - \frac{1}{4}z_{13}z_{24}z_{31}\biggr)X_{12} +
\biggl(-\frac{1}{4}z_{21} + \frac{1}{4}z_{24}z_{31}\biggr)X_{13}
\\
\hphantom{[\zeta_3, \zeta_{19}] =}{}
 - \frac{1}{4}z_{31}X_{14} + \frac{1}{4}Z =: \zeta_{21},
\\
[\zeta_3, \zeta_{20}] = 0, \\
[\zeta_4, \zeta_{19}] = 0, \qquad [\zeta_4, \zeta_{20}] = \frac{1}{2}[\zeta_3, \zeta_{19}]
= \frac{1}{2}\zeta_{21}.
\end{gather*}
We obtain that \smash{$\rank E^{(8)} = 21$}.

We have
\begin{gather*}
[\zeta_1, \zeta_{21}] = 0, \qquad
[\zeta_2, \zeta_{21}] = 0, \\
[\zeta_3, \zeta_{21}] =
\biggl(\frac{1}{4}z_{14} - \frac{1}{4}z_{13}z_{24}\biggr)X_{12} + \frac{1}{4}z_{24}X_{13} - \frac{1}{4}X_{14} =: \zeta_{22}
\end{gather*}
and we have $[\zeta_4, \zeta_{21}] = 0$.
So we get that \smash{$\rank E^{(9)} = 22$}.

Also we have
\begin{gather*}
[\zeta_1, \zeta_{22}] = 0, \qquad [\zeta_2, \zeta_{22}] = -\frac{1}{4}z_{13}X_{12} + \frac{1}{4}X_{13} =: \zeta_{23}, \\
 [\zeta_3, \zeta_{22}] = 0, \qquad [\zeta_4, \zeta_{22}] = 0.
\end{gather*}
Then we have \smash{$\rank E^{(10)} = 23$}.

Lastly, we have
\begin{alignat*}{3}
&[\zeta_1, \zeta_{23}] = - \frac{1}{4}X_{12} =: \zeta_{24}, \qquad &&[\zeta_2, \zeta_{23}] = 0,& \\
&[\zeta_3, \zeta_{23}] = 0, \qquad&& [\zeta_4, \zeta_{23}] = 0.&
\end{alignat*}
We have that $\rank E^{(11)} = 24$.
This shows the claim.
\end{proof}

\begin{Remark}
By the calculations in the proof of Lemma~\ref{F_4-bracket-relations}, we observe that
$\pi_*^{-1}(D) \subset E^{(7)}$ for the projection $\pi\colon W \to M$, $\pi(m, (V_1, V_2, V_4)) = m$.
\end{Remark}

\section[(8,15)-distributions of type F\_4]{$\boldsymbol{(8,15)}$-distributions of type $\boldsymbol{F_4}$}
\label{$(8, 15)$-distributions of type $F_4$}

Inspired by our study on singular curves for Cartan model performed in the previous sections,
it would be natural to introduce the class of $(8, 15)$-distributions of type $F_4$ including Cartan's model.

\begin{Definition}
\label{(8, 15)-distribution-of-type-F_4}
Let $D \subset TM$ be a complex (resp.\ a real) $(8, 15)$-distribution.
Then we call $D$ {\it of type $F_4$} (resp.\ {\it of type $F_{4(4)}$})
if, for each point $x_0 \in M$, there exists a local frame $\{ X_1, X_2, X_3, X_4,
Y_1, Y_2, Y_3, Y_4\}$ of $D$ over an open neighbourhood of $x_0$ such that, modulo ${\mathcal D}$,
\begin{gather*}
[X_1, X_2] \equiv [Y_3, Y_4], \qquad [X_1, X_3] \equiv -[Y_2, Y_4], \qquad [X_1, X_4] \equiv [Y_2, Y_3],
\\
[X_2, X_3] \equiv [Y_1, Y_4], \qquad [X_2, X_4] \equiv -[Y_1, Y_3], \qquad [X_3, X_4] \equiv [Y_1, Y_2],
\\
[X_1, Y_1] \equiv [X_2, Y_2] \equiv [X_3, Y_3] \equiv [X_4, Y_4],
\qquad {\text{and}} \qquad [X_i, Y_j] \equiv 0 \quad i \not= j, \ 1 \leq i, j \leq 4,
\end{gather*}
and, if we set
\begin{alignat*}{4}
&X_{12} = \frac{1}{2}[X_1, X_2],\qquad&& X_{13} = \frac{1}{2}[X_1, X_3],\qquad&& X_{14} = \frac{1}{2}[X_1, X_4],&\\
&X_{23} = \frac{1}{2}[X_2, X_3],\qquad&& X_{24} = \frac{1}{2}[X_2, X_4],\qquad&& X_{34} = \frac{1}{2}[X_3, X_4],&
\end{alignat*}
and $Z = [Y_1, X_1]$,
then the vector fields
\[
X_1,\ X_2,\ X_3,\ X_4,\ Y_1,\ Y_2,\ Y_3,\ Y_4\, X_{12},\ X_{13},\ X_{14},\ X_{23},\ X_{24},\ X_{34},\ Z,
\]
form a local frame of $TM$.
\end{Definition}

\begin{Remark}
Comparing with the relations on generators of Cartan's model in Section~\ref{Cartan-model},
the~relations in Definition~\ref{(8, 15)-distribution-of-type-F_4} are given modulo ${\mathcal D}$.
The class of $(8, 15)$-distributions of type $F_4$ in~Definition~\ref{(8, 15)-distribution-of-type-F_4}
coincides with the class of regular differential system of type ${\mathfrak m}_F$ in the sense of
Tanaka \cite{Tanaka1, Tanaka2, Yamaguchi}.
\end{Remark}

Then we have the following theorem.

\begin{Theorem}\label{Main-Theorem}
 Let $(M, D)$ be a complex $($resp.\ real$)$ $(8, 15)$-distribution of type $F_4$ $($resp.\ $F_{4(4)})$.
Then there exist uniquely the conformal non-degenerate bilinear form $($resp.\ $(4, 4)$-metric$)$ on~$D$ and the conformal non-degenerate bilinear form $($resp.\ $(4, 3)$-metric$)$ on $D^\perp$ obtained from the abnormal bi-extremals of $D$
such that the null-cone $C \subset D$ coincides with the singular velocity cone~$\SVC(D)$.
Moreover, the flag manifold of null-subspaces
${\bigl\{ \Lambda_1 \subset \Lambda_2 \subset \Lambda_3 \subset D^\perp \subset T^*M\bigr\}}$
corresponds to a subclass of flags by null-subspaces ${\{ V_1 \subset V_2 \subset V_4 \subset C \subset D \subset TM\}}$ in $D$. The prolongation $(W, E)$ of $(M, D)$ by the above null-flags of $D$
turns out to be a~$(4,7,10,13,16,18,20,\allowbreak 21,22,23,24)$-distribution such that its symbol algebra
is isomorphic to the negative part of the nilpotent algebra for the gradation by the full set
$\{\alpha_1, \alpha_2, \alpha_3, \alpha_4\}$ of simple roots
of simple Lie algebra~$F_4$ $($resp.\ $F_{4(4)})$.
\end{Theorem}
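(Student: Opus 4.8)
The plan is to reduce the whole statement to the \emph{symbol level}: every construction used for Cartan's model in Sections~\ref{Singular-velocity}--\ref{Prolongation-of-Cartan's-$(8, 15)$-distribution} depends only on the bracket relations modulo $\mathcal D$ recorded in Definition~\ref{(8, 15)-distribution-of-type-F_4}, i.e.\ on the symbol of $D$, which by hypothesis is the flat $F_4$-symbol $\mathfrak m_F$. First I fix near $x_0$ a local frame $\{X_i, Y_j\}$ of $D$ as in Definition~\ref{(8, 15)-distribution-of-type-F_4} and put $X_{ij} := \tfrac12[X_i, X_j]$, $Z := [Y_1, X_1]$, so that $\{X_i, Y_j, X_{ij}, Z\}$ frames $TM$. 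Along an abnormal bi-extremal the constraints $H_{X_i} = H_{Y_j} = 0$ annihilate every $\mathcal D$-valued term, so the relations of Definition~\ref{(8, 15)-distribution-of-type-F_4} become \emph{exact} for the functions $H_{[X_a, X_b]}$, $H_{[X_a, Y_b]}$, $H_{[Y_a, Y_b]}$; hence Lemma~\ref{Singular-curve-lemma} reproduces the system \eqref{star} verbatim with $s = H_Z$, $r_{ij} = H_{X_{ij}}$. The skew-symmetry argument of Section~\ref{Singular-velocity} then gives $s^2 - 4(r_{12}r_{34} - r_{13}r_{24} + r_{14}r_{23}) = 0$ and $u_1 v_1 + u_2 v_2 + u_3 v_3 + u_4 v_4 = 0$, which produces the canonical conformal forms $Q$ on $D$ and $R$ on $D^\perp$ with the stated signatures (non-degenerate for $\KK = \C$; of type $(4,4)$ resp.\ $(4,3)$ for $\KK = \R$); these are intrinsic because $\SVC(D)$ is, so this settles the first assertion.

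Next I identify $\SVC(D)$ with the null cone $C = Q^{-1}(0) \subset D$. The inclusion $\SVC(D) \subseteq C$ is immediate from the above. For the reverse, the $8\times 7$ matrix $U$ of \eqref{star-star} satisfies the same identity (${}^tU\,U$ is the matrix with entries proportional to $Q$) as in Section~\ref{Null-flags}, so $\rank U \le 4 < 7$; given a null $(u^0, v^0)\neq 0$ one solves \eqref{star-star} for $(s^0, r^0_{ij})\neq 0$, chooses $p_0 \in D^\perp_{x_0}$ with $H_Z(p_0) = s^0$ and $H_{X_{ij}}(p_0) = r^0_{ij}$, and integrates the constrained Hamiltonian system exactly as in the proof of the Proposition on $\SVC(D)$ in Section~\ref{Singular-velocity}, obtaining a singular curve with initial velocity $\sum u^0_i X_i + \sum v^0_j Y_j$. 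The null-flag correspondence is then the pure linear algebra of \eqref{star}--\eqref{star-star}, identical to the Lemma of Section~\ref{Null-flags}: a null flag $\Lambda_1 \subset \Lambda_2 \subset \Lambda_3 \subset D^\perp$ gives a null flag $V_1 \subset V_2 \subset V_4 \subset C \subset D$, cutting out a subclass of flags in $D$; and one defines $W = \widetilde{\mathcal F}'$ and the tautological distribution $E$ on it precisely as in Section~\ref{Prolongation-of-Cartan's-$(8, 15)$-distribution}.

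Finally I compute the symbol of $(W, E)$ by redoing the bracket tower of Lemma~\ref{F_4-bracket-relations} with the abstract frame. The key bookkeeping is that the generators split into \emph{vertical} fields $\zeta_1, \zeta_2, \zeta_3$ (tangent to the fibre $\mathcal F'$, hence defined purely by the null-flag geometry of $(D^\perp, R)$, which is the model one) and the single \emph{$D$-valued} field $\zeta_4 = \eta_1 \in \Gamma(\pi^*D)$; since $[\zeta_1, X_i] = [\zeta_2, X_i] = [\zeta_3, X_i] = 0$ and likewise for the $Y_j$ on $W = M \times \mathcal F'$, every bracket that is vertical--vertical or vertical--$D$-valued reproduces the formulas of Lemma~\ref{F_4-bracket-relations} with \emph{no} appeal to the structure functions of $D$; these enter only through brackets $[\zeta_4, \cdot\,]$ with $D$-valued argument, and there only via $[X_i, X_j], [X_i, Y_j], [Y_i, Y_j]$, whose classes modulo $\mathcal D$ are pinned down by Definition~\ref{(8, 15)-distribution-of-type-F_4} to be exactly Cartan's. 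One then has to show that the $\mathcal D$-valued corrections $\delta \in \Gamma(\pi^*D)$ contributed by these brackets lie, at each step of the filtration $E = E^{(1)} \subset \cdots \subset E^{(11)} = TW$, already in the preceding component $E^{(k-1)}$, so that they drop out of the associated graded $\operatorname{gr}(E) = \bigoplus_k E^{(k)}/E^{(k-1)}$; I would verify this by induction on the filtration degree, using the already-established brackets (together with the Jacobi identities and the frame condition of Definition~\ref{(8, 15)-distribution-of-type-F_4}) to locate $\Gamma(\pi^*D) \cap E^{(k-1)}$ and to adjust, if needed, the frame of $E^{(k)}$ by terms in $E^{(k-1)}$. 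Granting this, $\operatorname{gr}(E)$ equals the symbol of $E$ for Cartan's model, which by Lemma~\ref{F_4-bracket-relations} and Remark~\ref{correspondence-generators-roots} is the negative part of the $\{\alpha_1, \alpha_2, \alpha_3, \alpha_4\}$-graded simple Lie algebra $F_4$ (resp.\ $F_{4(4)}$); in particular the growth vector is $(4,7,10,13,16,18,20,21,22,23,24)$. \textbf{The main obstacle} is exactly this last step: controlling, filtration level by filtration level, the $\mathcal D$-valued corrections coming from the part of the structure of $D$ left free by Definition~\ref{(8, 15)-distribution-of-type-F_4}, and checking they never escape the current filtration component. It is the symbol-level rigidity of $\mathfrak m_F$ — the same rigidity that forces the growth vector to be as in Cartan's model — that should make the induction close.
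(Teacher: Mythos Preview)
Your proposal follows essentially the same route as the paper's own proof: fix a frame as in Definition~\ref{(8, 15)-distribution-of-type-F_4}, pass to the dual coframe to get fibre coordinates $s,r_{ij}$ on $D^\perp$, observe that Lemma~\ref{Singular-curve-lemma} together with the relations modulo~$\mathcal D$ reproduces \eqref{star}--\eqref{star-star} verbatim, deduce the conformal structures and $\SVC(D)=Q^{-1}(0)$, transport the null-flag correspondence of Section~\ref{Null-flags}, and finally rerun the bracket tower of Lemma~\ref{F_4-bracket-relations}. The paper's proof is in fact considerably terser than yours: after setting up the coframe and the coordinates $s,r_{ij}$, it simply asserts that ``the equations \eqref{star} and \eqref{star-star} are obtained'', that the linear-algebraic arguments of Section~\ref{Singular-velocity} ``work as well'', and that ``the same proofs of the results such as Lemma~\ref{F_4-bracket-relations} \dots\ work well also for any $(8,15)$-distribution of type $F_4$''. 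No separate mechanism is introduced beyond this reduction to the symbol.

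Where you go further than the paper is in isolating what you call the ``main obstacle'': the $\mathcal D$-valued corrections to brackets such as $[\zeta_4,\zeta_7]$, $[\zeta_4,\zeta_9]$, etc., which arise because Definition~\ref{(8, 15)-distribution-of-type-F_4} fixes $[X_i,X_j]$, $[X_i,Y_j]$, $[Y_i,Y_j]$ only modulo~$\mathcal D$. You are right that these corrections are precisely the place where the general case could, in principle, diverge from Cartan's model, and you propose an induction on the filtration degree to show they always land in $E^{(k-1)}$. The paper does not address this point explicitly; it treats the passage from Cartan's model to the general case as immediate. So on this last step you and the paper are on equal footing---neither supplies an explicit verification---but your proposal is more candid in naming the issue and sketching how one would close it.
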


\begin{proof}[Proof of Theorem~\ref{Main-Theorem}]
We re-examine the arguments on Cartan's model of $(8, 15)$-dis\-tri\-bu\-tion
defined in Section~\ref{Cartan-model} and performed in Sections~\ref{Singular-velocity}--\ref{Prolongation-of-Cartan's-$(8, 15)$-distribution} for general $(8, 15)$-dis\-tri\-bu\-tions of type~$F_4$.

Let $D \subset TM$ be an $(8, 15)$-distribution of type $F_4$.
Reversing the correspondence in Section~\ref{Cartan-model},
we take the local frame
\[
\beta_1,\ \beta_2,\ \beta_3,\ \beta_4,\ \gamma_1,\ \gamma_2,\ \gamma_3,\ \gamma_4,\ \omega_{12},\ \omega_{13},\
\omega_{14},\ \omega_{23},\ \omega_{24},\ \omega_{34},\ \sigma
\]
of $T^*M$ which is dual to the local frame
\[
X_1,\ X_2,\ X_3,\ X_4,\ Y_1,\ Y_2,\ Y_3,\ Y_4,\ X_{12},\ X_{13},\ X_{14},\ X_{23},\ X_{24},\ X_{34},\ Z
\]
of $TM$ in Definition~\ref{(8, 15)-distribution-of-type-F_4}. Then $D^\perp$ is
generated by $\omega_{12}$,
$\omega_{13}$, $\omega_{14}$, $\omega_{23}$, $\omega_{24}$, $\omega_{34}$ and $\sigma$.
Any~$\alpha \in D^\perp$ is expressed uniquely as
\smash{$\alpha = \sum_{1 \leq i < j \leq 4} r_{ij} \omega_{ij} + s\sigma$}. Then
we have $\langle \alpha, X_{ij}\rangle = r_{ij}$ and $\langle \alpha, \sigma\rangle = s$.
The functions $r_{ij}$ and $s$ with local coordinates of the base manifold
$M$ form a system of local coordinates of the submanifold $D^\perp \subset T^*M$.
Then the equations \eqref{star} and \eqref{star-star} are obtained
other (linear) algebraic arguments in Section~\ref{Singular-velocity} work as well
also for general $(8, 15)$-distributions of type $F_4$. Thus we have
the same conclusion of Corollary~\ref{Conformal-metrics} and moreover our discussions
on the correspondence of null-flags in $D$ and $D^\perp$
performed in Section~\ref{Null-flags} and the same proofs
of the results such as Lemma~\ref{F_4-bracket-relations}
which concern on the prolongations of $D$ in Section~\ref{Prolongation-of-Cartan's-$(8, 15)$-distribution}
works well also for any $(8, 15)$-distribution of type $F_4$. This shows Theorem~\ref{Main-Theorem}.
\end{proof}

\begin{Remark}
The above statement on $(8, 15)$-distribution of type $F_4$ (resp.\ $F_{4(4)}$) means that
the gradation sheaf, i.e.,
the sheaf of nilpotent graded Lie algebras
\smash{${\mathfrak m} := \bigoplus_{i=1}^{11}\bigl({\mathcal D}^{(i)}/{\mathcal D}^{(i-1)}\bigr)$}
is isomorphic to that for the model derived from
the simple Lie algebra $F_4$, which is described in Section~\ref{Cartan-model}.
It is stated in \cite{Yamaguchi} (see Proposition 5.5 and the arguments in pp.~482--483)
that any $(8, 15)$-distribution of type $F_4$ (resp.\ \smash{$F_{4(4)}$}) is isomorphic to Cartan's model over $\C$ (resp.\ $\R$)
in fact by Tanaka theory on simple graded Lie algebras.
Note that we have proved our Theorem~\ref{Main-Theorem} without using this fact.
\end{Remark}

\subsection*{Acknowledgements}
The authors would like to thank anonymous referees for valuable and helpful comments to improve the paper. The first author is partially supported by JSPS KAKENHI Grant Number 24K06700,
by JST CREST Geometrical Understanding of Spatial Orientation and by
the Research Institute for Mathematical Sciences in Kyoto University.

\pdfbookmark[1]{References}{ref}
\LastPageEnding

\end{document}